\font\Bbb=msbm10
\def\P{\hbox{\Bbb P}}
\def\C{\hbox{\Bbb C}}
\def\R{\hbox{\Bbb R}}
\def\Z{\hbox{\Bbb Z}}
\def\N{\hbox{\Bbb N}}
\newcommand{\scEtilde}{\ensuremath{\mathcal{E}}}
\newcommand{\scFtilde}{\ensuremath{\mathcal{F}}}
\newcommand{\scE}{\ensuremath{\mathcal{E}}}
\newcommand{\scF}{\ensuremath{\mathcal{F}}}
\newcommand{\scEbar}{\overline{\scE}}
\newcommand{\scFbar}{\overline{\scF}}
\newcommand{\scO}{\ensuremath{\mathcal{O}}}
\newtheorem{theorem}{\bf Theorem}
\newtheorem{lemma}[theorem]{\bf Lemma}
\newtheorem{remark}[theorem]{\bf Remark}
\newtheorem{proposition}[theorem]{\bf Proposition}
\newtheorem{definition}[theorem]{\bf Definition}
\numberwithin{equation}{section} \numberwithin{theorem}{section}
\author{Susumu TANAB\'E}               
\title{On monodromy representation of  period integrals
associated to an algebraic curve with bi-degree {\it (2,2)}}               
\begin{document}
\maketitle

\begin{abstract}
We study a problem related to Kontsevich's homological mirror symmetry conjecture for the case of a generic curve 
 $\cal Y$ with bi-degree {\it (2,2)} in  a product of projective lines $\P^{1} \times \P^{1}$.  We calculate two differenent monodromy representations  of period
integrals  for the affine variety ${\cal X}^{(2,2)}$ obtained by the dual polyhedron mirror variety construction from $\cal Y$. The first method that gives a full representation of the fundamental group of the complement to singular loci relies on the generalised Picard-Lefschetz theorem. 
The second method uses the analytic continuation of the Mellin-Barnes integrals that gives us a proper subgroup of the monodromy group.
It turns out both representations admit a Hermitian quadratic invariant form that is given by a Gram matrix  of  a split generator of the derived category of coherent sheaves on on $\cal Y$ with respect to the Euler form. 
\end{abstract}



\setcounter{section}{-1}
\section{Introduction \label{sec:introduction }}

In this note we study a problem related to Kontsevich's homological mirror symmetry conjecture for the case of a generic curve 
 $\cal Y$ with bi-degree {\it (2,2)} in  a product of projective lines $\P^{1} \times \P^{1}$.

In \cite{TaU13} we studied 
 the Calabi-Yau complete intersection $Y$ in a weighted projective space.
We claimed  
 that the space of Hermitian quadratic invariants of
the monodromy group for the period integrals associated to  the Batyre-Borisov mirror dual complete intersection $X$
is one-dimensional,
and spanned by the Gram matrix of a split-generator
of the derived category of coherent sheaves on $Y$ with respect to the Euler form.
To show this result  the monodromy group has been calculated as  monodromy group for  Pochhammer hypergeometric functions.  

In following the spirit of  \cite{TaU13} where period integrals depending  on single deformation parameter are studied, we  establish a similar result for the case of 
period integrals depending on two variables. In particular, here the crucial moment is an interpretation of period integrals as Horn hypergeometric functions in two variables whose rank 4 monodromy representation is reducible.

Namely we consider
the generic curve $\cal Y$ of bi-degree (2,2) in $\P^1 \times \P^1$ 
and its mirror counter-part  ${\cal X}^{(2,2)}_{x,y}$ obtained by Batyrev's dual polyhedron construction (\ref{X22}). 
We establish the following result on the monodromy representation of  period integrals defined as integrals along cycles from
 $H_1({\cal X}^{(2,2)}_{x,y}).$ In this note we shall use both notations $\sqrt -1$ and $i$ to denote the unit pure imaginary number. 

\begin{theorem}
The monodromy representation $H_0$ calculated by the Mellin-Barnes integrals
(Proposition \ref{rank4monodromy})
as well as that obtained by the generalised Picard-Lefschetz theorem ( 
Proposition \ref{gotomatsumoto})
 admits a Hermitian quadratic invariant $ \sqrt -1 {\sf G}$
for 
$$
{\sf G}= \left(
\begin{array}{cccc}
 0 &- 2 & 0 & 2 \\
 2 & 0 & -2 & 0 \\
 0 & 2 & 0 & -2 \\
 -2 & 0 & 2 & 0
\end{array}
\right),
$$
up to conjugate isomorphism of representations.
Here the anti-symmetric matrix ${\sf G}$ is
a Gram matrix with respect to the Euler form of a split generator on $\cal Y$
 obtained by restricting 
a full exceptional collection 
$({\mathcal F}_i)_{i=1}^4$ determined by  (\ref{eq:Euler_form}) that is a right dual exceptional collection to
$({\mathcal O}, {\mathcal O}(1,0), {\mathcal O}(1,1), {\mathcal O}(2,1))$
on $D^b Coh\; (\P^1 \times \P^1) $ restricted
to $\cal Y$.
 \label{th:invariant}
\end{theorem}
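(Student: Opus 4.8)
\emph{Plan of proof.} The statement packages three logically independent facts: that $\sqrt{-1}\,{\sf G}$ is preserved by the Mellin--Barnes representation of Proposition~\ref{rank4monodromy}, that it is preserved by the Picard--Lefschetz representation of Proposition~\ref{gotomatsumoto} after an appropriate identification of the two local systems, and that ${\sf G}$ is the Gram matrix of the Euler form of the split generator obtained from $({\mathcal O},{\mathcal O}(1,0),{\mathcal O}(1,1),{\mathcal O}(2,1))$. I treat them in turn, the first two by a direct check on explicit generators and the third by a short $K$-theoretic computation. First observe that, since ${\sf G}$ is real and anti-symmetric, $\sqrt{-1}\,{\sf G}$ is Hermitian and the invariance condition $M^{\ast}(\sqrt{-1}\,{\sf G})M=\sqrt{-1}\,{\sf G}$ is equivalent to $M^{\ast}{\sf G}M={\sf G}$. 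Taking the explicit generators of the rank-$4$ Horn monodromy group from Proposition~\ref{rank4monodromy} — the local monodromies around $x=0$, $y=0$ and around the components of the discriminant of the underlying Horn system attached to (\ref{X22}) — I would verify $M^{\ast}{\sf G}M={\sf G}$ for each generator. This finite linear-algebra check shows $\sqrt{-1}\,{\sf G}$ lies in the space of Hermitian invariants of the whole group and hence of the sub-representation $H_0$ it cuts out; and since ${\sf G}$ has rank $2$, its radical is exactly the invariant $2$-plane accounting for the reducibility mentioned in the introduction.

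Next, to transfer the invariant to the Picard--Lefschetz picture I would write down the transition matrix $P$ between the basis of period integrals underlying the Mellin--Barnes computation and the basis of vanishing cycles of Proposition~\ref{gotomatsumoto}, reading it off from the leading asymptotics of the period integrals near the point of maximal unipotent monodromy. Because the two analytic continuations differ by the orientation of the relevant small loops, $P$ will intertwine the Picard--Lefschetz generators with the \emph{complex conjugates} of the Mellin--Barnes ones — this is the content of the phrase ``up to conjugate isomorphism of representations'' — and one then checks $P^{-1}\overline{M^{\mathrm{MB}}_{\bullet}}\,P=M^{\mathrm{PL}}_{\bullet}$ on generators and transports the form as $P^{\ast}(\sqrt{-1}\,{\sf G})\,P$. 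Since the generalised Picard--Lefschetz method yields the \emph{full} monodromy group of the complement of the singular loci, solving $M^{\ast}HM=H$ over this larger generating set pins the space of Hermitian invariants down to the line $\R\cdot\sqrt{-1}\,{\sf G}$, exactly as in the one-parameter situation of \cite{TaU13}.

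Finally, for the identification with the Euler form I use that, by adjunction, a smooth curve $\cal Y$ of bi-degree $(2,2)$ has $\omega_{\cal Y}\cong{\mathcal O}_{\cal Y}$, so $\cal Y$ is an elliptic curve, the Euler pairing on $D^{b}Coh\,({\cal Y})$ is anti-symmetric, and $\chi_{\cal Y}({\mathcal A},{\mathcal B})=\mathrm{rk}({\mathcal A})\deg({\mathcal B})-\mathrm{rk}({\mathcal B})\deg({\mathcal A})$ by Riemann--Roch. Computing the right dual exceptional collection $({\mathcal F}_{i})_{i=1}^{4}$ of (\ref{eq:Euler_form}) amounts to inverting the upper-triangular Gram matrix $\bigl(\chi({\mathcal E}_{i},{\mathcal E}_{j})\bigr)$ of $({\mathcal O},{\mathcal O}(1,0),{\mathcal O}(1,1),{\mathcal O}(2,1))$ on $\P^{1}\times\P^{1}$; restricting the resulting classes to $\cal Y$ produces ranks $(1,-1,-1,1)$ and degrees $(0,2,0,-2)$ (using $\deg({\mathcal O}(a,b)|_{\cal Y})=2(a+b)$), and the anti-symmetric matrix with entries $\mathrm{rk}({\mathcal F}_{i})\deg({\mathcal F}_{j})-\mathrm{rk}({\mathcal F}_{j})\deg({\mathcal F}_{i})$ is then ${\sf G}$ (the overall sign being fixed by the standard ordering convention for the right dual collection).

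The main obstacle is the middle step: matching the transcendentally defined Mellin--Barnes basis of period integrals with the topological basis of vanishing cycles, getting the normalisations and the complex conjugation right, so that the invariant produced on each side is literally the same matrix ${\sf G}$ rather than merely a scalar multiple of it. Once the explicit matrices of Propositions~\ref{rank4monodromy} and \ref{gotomatsumoto} and the dual collection (\ref{eq:Euler_form}) are in hand, the invariance verifications in the first and last steps are purely mechanical.
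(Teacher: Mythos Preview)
Your plan has a genuine gap in the first two steps, stemming from a misreading of what ``up to conjugate isomorphism of representations'' means in this statement.

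For the Mellin--Barnes part you propose to verify $M^{\ast}{\sf G}M={\sf G}$ directly for the explicit generators $M_{10},M_{20},M_{1\infty},M_{2\infty}$ of Proposition~\ref{rank4monodromy}. That check simply fails: the Hermitian invariant of those four matrices in the basis $(f_{00},f_{10},f_{01},f_{11})$ is not $\sqrt{-1}\,{\sf G}$ but the real symmetric matrix
\[
{\sf H}=\begin{pmatrix}0&0&0&0\\0&0&0&2\sqrt{2}\\0&0&0&2\sqrt{2}\\0&2\sqrt{2}&2\sqrt{2}&0\end{pmatrix},
\]
which is visibly not a scalar multiple of $\sqrt{-1}\,{\sf G}$. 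What the paper actually does is compute this ${\sf H}$ and then exhibit an explicit unitary ${\sf R}$ with $^{t}\bar{\sf R}\,{\sf H}\,{\sf R}=\sqrt{-1}\,{\sf G}$. The phrase ``up to conjugate isomorphism'' refers to this base change, not to complex conjugation of matrix entries.

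For the Picard--Lefschetz part you look for a single transition matrix $P$ intertwining the Mellin--Barnes basis with the vanishing-cycle basis and then transporting the form. The paper does not attempt this and in fact remarks that no conjugation sends $M_{j0}$ to $\rho_{(0,0)}(\gamma_j)$ simultaneously for $j=1,2$; moreover $H_0$ is only a \emph{proper} subgroup of the full monodromy group that the Picard--Lefschetz method produces, so the two cannot be intertwined as you describe. The paper's route is again independent of the Mellin--Barnes computation: it writes down the Hermitian invariant $\tilde{\sf H}_0$ of the representation $\rho_{(0,0)}$ (obtained as the $a,b\to 0$ limit of the invariant for generic parameters in~(\ref{horn(1,0)(0,1)(2,2ab)})), and then finds a separate unitary ${\sf R}_0$ with $^{t}\bar{\sf R}_0\,\tilde{\sf H}_0\,{\sf R}_0=\sqrt{-1}\,{\sf G}$.

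Your third step, computing ${\sf G}$ by Riemann--Roch on the elliptic curve $\cal Y$, is a legitimate alternative to the paper's method (which inverts the upper-triangular Gram matrix $\bigl(\chi(\scE_i,\scE_j)\bigr)$ on $\P^1\times\P^1$ and then anti-symmetrises via $\chi([\scFbar_i],[\scFbar_j])=\chi(\scF_i,\scF_j)-\chi(\scF_j,\scF_i)$). With your stated ranks $(1,-1,-1,1)$ and degrees $(0,2,0,-2)$ the formula $r_id_j-r_jd_i$ yields $-{\sf G}$ rather than ${\sf G}$; since the invariant is only determined up to a real scalar this is harmless, but it is worth tracking which ordering convention for the dual collection fixes the sign.
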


Our theorem \ref{th:invariant} is closely related
to the works of Horja \cite[Theorem 4.9]{Horja}
and Golyshev \cite[\S 3.5]{Gol},
which originated from a conjecture proposed by Kontsevich in 1998.

The main difference of \cite{TaU13} from the works  \cite{Horja}, \cite{Gol}  lies in the fact that
it treats the reducible system 
which contains sections
not coming from period integrals on the compact mirror manifold.
In the case of the irreducible local system (hypergeometric equation), 
Golyshev gave a beautiful interpretation
in terms of autoequivalences of the derived category
of the mirror manifold.

Our proof of Theorem \ref{th:invariant}   relies on calculus of a Horn hypergeometric system wıth reducible monodromy,
just as in   \cite{TaU13} where the case of the irreducible hypergeometric system has been extended to that of a reducible system.

We shall recognise that our description of the representation $H_0$ in
Proposition \ref{rank4monodromy}  is not conclusive so far as we ignore its nature as a representation of  the fundamental group of the complement to singular loci
of the Horn hypergeometric system. Furthermore the representation $H_0$ gives only a proper subgroup of the entire monodromy group ( Proposition \ref{rank4monodromy}, Remark \ref{remark:gamma2}) .    None the less it admits a one dimensional real vector space of Hermitian quadratic  forms.

The core part of this note is the monodromy calculus in Proposition \ref{rank4monodromy} made by means of analytic continuation of Mellin-Barnes integrals. To our knowledge no trial has been made to calculate a global monodromy representation of bivariate period integrals using Mellin-Barnes integrals.
We shall, however, mention \cite[4.3]{Horja} as one of precious testimony where this approach was successfully applied to a problem related to the Kontsevich's homological mirror conjecture. The proposal made in \cite{Beukers} also deserves special attention for further studies of period integrals as a class of A-hypergeometric functions.

One of advantages of our method consists in the fact that the choice  of the solution basis 
(\ref{ex(1,0)(0,1)(2,2)}) allows us to calculate the monodromy without  connection matrices.  In the calculus of the monodromy of univariate hypergeometric functions (\cite[2.4.6]{IKSY}, \cite{Smith}) solution basis has been chosen in dependence on the asymptotic behaviour (i.e. characteristic exponents) of the solution around singular points and quite involved calculus of connection matrix was necessary.
In this note every data on the monodromy are calculated relying exclusively on the Mellin transform
(\ref{oresato}) that can be easily derived from the Newton polyhedron $\Delta_{F_{2,2}}$ of  the Laurent polynomial  ${F_{2,2}}$ (\ref{polyhedron1})  according to the principle proposed in \cite{Tan07}. After this principle the Mellin transform of a period integral has poles with a semi-group like structure
whose features are determined by outer normals to the faces of $\Delta_{F_{2,2}}$ and their scalar product with exponent characterising the monomial cohomology class present in the integrand.

The author expresses his gratitude to Kazushi  Ueda who furnished the concrete form of
the Gram matrix  ${\sf G}$ upon his  request. Without this information it would have been impossible to make any kind of trial.
His acknowledgement goes also to M.Uluda\u{g}, F.Beukers, Y.Goto, J.Kaneko for valuable discussions and comments. 
A special recognition goes out to the organisers of the First Romanian-Turkish Mathematics Colloquium at Constan\c{t}a in October 2015.

\section{Preliminaries on  elliptic integrals and Gauss hypergeometric functions \label{sec:preliminaries}}

First of all we recall basic facts on the relation between period integrals for the elliptic curve (elliptic integrals)
and Gauss hypergeometric functions.

 Consider a double covering of $\P^1 \setminus \{x=0,1,t,\infty \}$
\begin{equation}
{\mathcal R}=\{(x,y) \in \P^2 : y^2=x(x-1)(x-t) \}. 
\end{equation}

It is known that this algebraic curve (elliptic curve)  gives a Riemann surface of genus $1.$ 
One can define the elliptic integral for a cycle $\alpha \in H_1(\mathcal R)$
\begin{equation}
\displaystyle{\int_\alpha \frac{dx}{y}} = \displaystyle{\int_\alpha \frac{dx}{\sqrt{x(x-1)(x-t)}}} 
\end{equation}

 \begin{figure}[h]
\begin{center}
\includegraphics[width=0.4\textwidth]{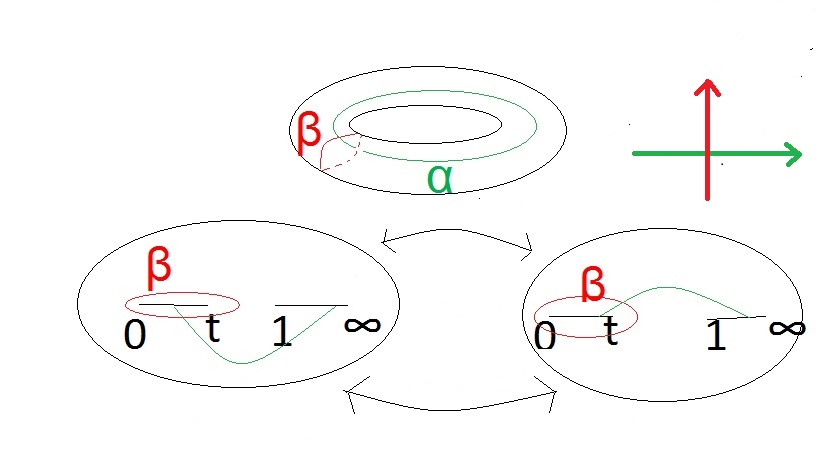}
\caption{  Cycles on the curve $R$.}
\end{center}
\label{fig:pathpicture1}
\end{figure}

\noindent
This integral can be expressed by the classical Gauss hypergeometric function 
\begin{equation}
F(\frac{1}{2},\frac{1}{2}, 1|t)=\sum_{m=0}^{\infty} \frac{\Gamma(\frac{1}{2} +m)^2}{(m!)^2}t^m,
\label{solutionHGF}
\end{equation}
 for $|t|<1$ and it satisfies a second order differential equation

\begin{equation}
 [(\theta_{t})^2 -t(\theta_{t}+ \frac{1}{2})^2]\displaystyle{f(x)=0},
\label{GHGeq}
\end{equation}
$(\theta_{t}= t \frac{\partial}{\partial t}).$ 
The solution space to this equation has dimension 2 that is equal to the rank of $ H_1(\mathcal R).$ This means that a general solution to (\ref{GHGeq})  is given by $\displaystyle{\int_{n \alpha+ m \beta} \frac{dx}{y}}$ for some
$(n,m) \in \Z^2.$
 \vspace{1pc}

\noindent
We remark here that
the solution (\ref{solutionHGF}) admits a Mellin-Barnes integral representation (sum of residues)
\begin{equation}
\sum_{n=0}^\infty \displaystyle{Res_{z = n }\; (\frac{\Gamma ({\frac 1 2}+z)^2\Gamma (-z) (-t)^z dz}{\Gamma (1+z)})}, 
\end{equation}

\noindent
As a basis of the cohomology of the elliptic curve $H^1(\mathcal R)$ we can choose a couple of rational forms
\begin{equation}
\frac{dx}{y}, \;\;\;\;\;\;x\frac{dx}{y}
\end{equation}
The dimension of the $\C$ vector space
$H^1(\mathcal R)$ is  equal to $ 2 = rank H_1(\mathcal  R).$   

The period  integral
$$\displaystyle{\int_{n \alpha+ m \beta} \frac{x dx}{y}},\;\;\; (n,m) \in \Z^2$$
also satisfies a Gauss hypergeometric equation analogous to (\ref{GHGeq}) ,
\begin{equation} [(\theta_{t})^2 -t(\theta_{t} - \frac{ 1}{2})^2] u(t)=0.  
\end{equation}

The monodromy group $G$ of a solution system to (\ref{GHGeq})
admits the following representation (\cite{BeukersHeckman})
$$G \subset SL(2,\Z) = Sp(1,\Z)$$
\begin{equation} G= \langle h_0:=\left(
\begin{array}{cc}
0 & 1 \\
-1 & 2 \\
\end{array} \right), h_\infty:=\left(
\begin{array}{cc}
-2 & -1 \\
1 & 0 \\
\end{array} \right)\rangle
\end{equation}

By conjugation with the matrix $C_0 \in SL(2, \R) $
$$C_0=\left(
\begin{array}{cc}
 \frac{1}{\sqrt{2}} & \frac{1}{\sqrt{2}} \\
 -\frac{1}{\sqrt{2}} & \frac{1}{\sqrt{2}}
\end{array}
\right),$$
we get the following  two matrices
$$  h_0^{C_0}= \left(
\begin{array}{cc}
 1 & 0 \\
 -2 & 1
\end{array}
\right),    h_\infty^{C_0}=\left(
\begin{array}{cc}
 -1 & 2 \\
 0 & -1
\end{array}
\right)$$
that generates together with $-Id_2$ (that becomes trivial in passing to the projective linear group),
$$ \Gamma(2) = \{g \in SL(2,\Z);    g \equiv Id_2 \;\; mod \;2\}$$
the principal congruence subgroup of level 2.  From now on we use the notation
$$ A^B = B^{-1} A B$$
for $A \in M(m, \C)$ and $B \in GL(m,\C),$ $m \geq 1.$
Namely $ h_0^{C_0} = C_0^{-1} h_0 C_0$ etc. 

The intersection matrix $Int$ with respect to   the basis $\alpha,\beta$ of $H_1(\mathcal R)$
\vspace{1pc}
$$
Int:=\left(
\begin{array}{cc}
<\alpha,\alpha>& <\alpha,\beta> \\
<\beta,\alpha> & <\beta,\beta> \\
\end{array} \right) = \left(
\begin{array}{cc}
0 & 1 \\
-1 & 0 \\
\end{array} \right),  $$
\begin{equation}^th_0.Int.h_0 = Int,\;\;\;\; ^th_\infty.Int.h_\infty = Int. \end{equation}
The intersection matrix $Int$ is the simplest example of the Hermitian quadratic  invariant associated to a hypergeometric functions/period integrals (see \cite[Chapter 4]{BeukersHeckman}).

Here we shall remark that the conjugate matrix $C_0$ satisfies $^tC_0. Int. C_0 = Int$
and the monodromy representation $G$ can be determined only up to a conjugate by a matrix
of $Sp(1,\R) = \{g \in GL(2, \R);   ^tg.Int.g= Int \} .$
This kind of ambiguity will play essential r\^ole as we compare different presentations of a monodromy group. 

In the remaining part of the note
all statements mentioned in this section will be generalised to the case of a bi-degree (2,2) curve. 

\section{ Period integrals of a bi-degree (2,2) curve}

The generic curve $\cal Y$ with bi-degree  $(2,2)$ in $\P^{1} \times \P^{1}$ is defined by
a Laurent polynomial whose Newton polyhedron is
\begin{equation}
 \{(\alpha,\beta) \in \R^2; -1 \leq \alpha \leq 1, -1 \leq \beta \leq 1    \}. 
\label{dualpolyhedron2}
\end{equation}

\rm
The main object of this article is an affine curve
\begin{equation}
 {\cal X}^{(2,2)}_{x,y}= \{(z,w);  F_{2,2}(z,w)=0 \}  = {\rm elliptic\; curve} \setminus 3 \; points
\label{X22}
\end{equation}
for
\begin{equation}
 F_{2,2} (z,w)=1 + z+\frac{x}{z} + w+\frac{y}{w}
\label{polyhedron1}
\end{equation}
 whose Newton polyhedron is defined as
the dual polyhedron to (\ref{dualpolyhedron2}) after Batyrev's construction.
The period integral associated to the curve (\ref{X22}) is defined as
$$  I_{a,b}(x,y)= \int_{\gamma} \frac{z^a w^b}{z w }\frac{ dz \wedge dw}{dF_{2,2}}$$
for $\gamma \in H_{1}  ( {\cal X}^{(2,2)}_{x,y}  )$ and a monomial $z^a w^b \in \C[z,w].$

After the method in \cite{Tan07} we calculate the Mellin transform of the period integral that equals to
$$ \Gamma(s+a)\Gamma(s)\Gamma(t+b) \Gamma(t) \Gamma(1 -a-b - (2s+2t))$$
up to multiplication by a meromorphic period function $\phi(s,t)$ such that $\phi(s+a', t+b') = \phi(s,t)$ for every $(a'.b') \in \Z^2.$
Thus the period integral $I_{a,b}(x,y)$ satisfies the following system of linear PDE,
\begin{equation}
\begin{array}{l}
\left(\theta_{x}(\theta_x + a)  - x (2\theta_x + 2\theta_y+1+a+b)(2\theta_x + 2\theta_y+2+a+b)\right)f(x,y)=0, \\
\left(\theta_{y}(\theta_y + b)  - y (2\theta_x + 2\theta_y+1+a+b)(2\theta_x + 2\theta_y+2+a+b)\right)f(x,y)=0.
\end{array}
\label{horn(1,0)(0,1)(2,2ab)}
\end{equation}
Further we use the notation $$ \theta_x = x \frac{\partial }{\partial x}, \theta_y = y \frac{\partial }{\partial y}.$$ 

This type of system of differential equations is called Horn hypergeometric system and solutions to it are called Horn hypergeometric functions (see \cite{DMS}, \cite{PST}, \cite{SadykovTanabe}).
The system \ref{horn(1,0)(0,1)(2,2ab)} has a solution holomorphic in the neighbourhood of $(x,y)$;
\begin{equation}
f^1_{1,1} (x,y)  = \sum_{(i_1, i_2 ) \in \Z^2_{\geq 0}} \frac{\Gamma(\frac{1-a-b}{2} + i_1 + i_2) \Gamma( \frac{2-a-b}{2} + i_1 + i_2)}{\Gamma(1-a+i_1) \Gamma(1-b+i_2)  (i_1!) (i_2!)} (4x)^{i_1} (4y)^{i_2}.
\label{abholsolution}
\end{equation}
As the rank $  H_{1}  (  {\cal X}^{(2,2)}_{x,y}) =4$
that is calculated by the area of a parallelogram with vertices $\{ (\pm 1,0), (0,\pm 1) \}$
(the Newton polyhedron of $F_{2,2} (z,w)$ for $(x,y) \in (\C^\ast)^2$)
we conclude that every solution to the system 
(\ref{horn(1,0)(0,1)(2,2ab)})
is a linear combination over $\C$ of period integrals.

In particular the period integral $I_{0,0}(s,y)$ satisfies 
 the Horn system with holonomic rank $4$ (see \cite[Corollary 4.3]{DMS}):
\begin{equation}
\begin{array}{l}
\left(\theta_{x}^2  - x (2\theta_x + 2\theta_y+1)(2\theta_x + 2\theta_y+2)\right)f(x,y)=0, \\
\left(\theta_{y}^2  - y (2\theta_x + 2\theta_y+1)(2\theta_x + 2\theta_y+2)\right)f(x,y)=0.
\end{array}
\label{horn(1,0)(0,1)(2,2)}
\end{equation}
In fact every period integral $I_{0,0}(x,y)$ can be expressed as residues of the Mellin transform that is known under the name of Mellin-Barnes integral
\begin{equation}
\int_{\Gamma_k} \phi(s,t) \Gamma(s)^2\Gamma(t)^2 \Gamma(1 - (2s+2t)) ds \wedge dt,
\label{oresato}
\end{equation}
where $\Gamma_k$ is one of the following pole lattices (points with a semi-group structure located inside of a cone) 
\begin{equation}
\begin{array}{l}
\Gamma_1= \{ (s,t) \in \C ; s \in \Z_{\leq 0 }, t \in \Z_{\leq 0} \},\\
\Gamma_2= \{ (s,t) \in \C ; t \in \Z_{\leq 0 }, 2s+2t-1 \in \Z_{\geq 0} \},\\
\Gamma_3= \{ (s,t) \in \C ; s \in \Z_{\leq 0 }, 2s+2t-1 \in \Z_{\geq 0} \}.
\end{array}
\label{poles}
\end{equation}

\begin{center}
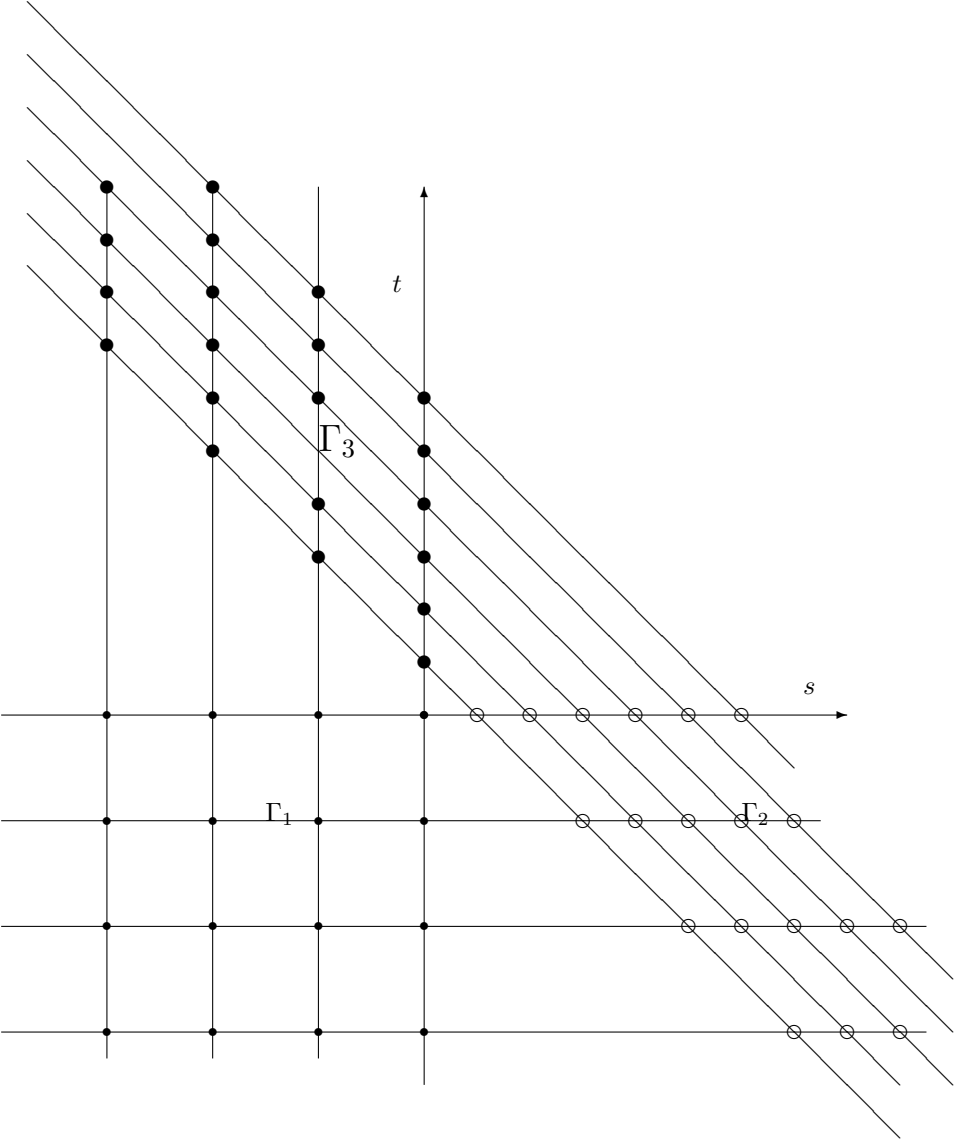
\begin{figure}[ht!]
\begin{minipage}{2cm}
\begin{picture}(400,400)
  \put(80,-80){\vector(0,1){340}}
  \put(-80,60){\vector(1,0){320}}
  \put(220,68){ $s$}
  \put(68,220){$t$}

  \put(-80,20){\line(1,0){310}}
  \put(-80,-20){\line(1,0){350}}
  \put(-80,-60){\line(1,0){350}}
\put(-70,230){\line(1,-1){330}}
\put(-70,250){\line(1,-1){330}}
\put(-70,270){\line(1,-1){350}}
\put(-70,290){\line(1,-1){350}}
\put(-70,310){\line(1,-1){350}}
\put(-70,330){\line(1,-1){290}}   
\put(40,260){\line(0,-1){330}}
\put(0,260){\line(0,-1){330}}
\put(-40,260){\line(0,-1){330}}
  
\put(80,60){\circle*{3}}
\put(80,80){\circle*{5}}
\put(80,100){\circle*{5}}
\put(80,120){\circle*{5}}
\put(80,140){\circle*{5}}
\put(80,160){\circle*{5}}
\put(80,180){\circle*{5}}
 \put(40,120){\circle*{5}}
 \put(40,140){\circle*{5}}
\put(40,160){\Large $\Gamma_3$}
 \put(40,180){\circle*{5}}
 \put(40,200){\circle*{5}}
 \put(40,220){\circle*{5}}
\put(0,160){\circle*{5}}
 \put(0,180){\circle*{5}}
 \put(0,200){\circle*{5}}
 \put(0,220){\circle*{5}}
 \put(0,240){\circle*{5}}
\put(0,260){\circle*{5}}
 \put(-40,200){\circle*{5}}
 \put(-40,220){\circle*{5}}
 \put(-40,240){\circle*{5}}
\put(-40,260){\circle*{5}}
\put(80,60){\circle*{3}}
\put(80,20){\circle*{3}}
\put(80,-20){\circle*{3}}
\put(80,-60){\circle*{3}}
\put(40,60){\circle*{3}}
 \put(40,-20){\circle*{3}}
 \put(40,-60){\circle*{3}}
 \put(40,20){\circle*{3}}
\put(20,20){$\Gamma_1$}
 \put(0,60){\circle*{3}}
 \put(0,-20){\circle*{3}}
 \put(0,-60){\circle*{3}}
 \put(0,20){\circle*{3}}
\put(-40,-60){\circle*{3}}
\put(-40,20){\circle*{3}}
\put(-40,-20){\circle*{3}}
\put(-40,60){\circle*{3}}
\put(100,60){\circle{5}}
\put(120,60){\circle{5}}
\put(140,60){\circle{5}}
\put(160,60){\circle{5}}
\put(180,60){\circle{5}}
\put(200,60){\circle{5}}
\put(140,20){\circle{5}}
\put(160,20){\circle{5}}
\put(180,20){\circle{5}}
\put(200,20){\circle{5}}
\put(200,20){$\Gamma_2$}
\put(220,20){\circle{5}}
\put(180,-20){\circle{5}}
\put(200,-20){\circle{5}}
\put(220,-20){\circle{5}}
\put(240,-20){\circle{5}}
\put(260,-20){\circle{5}}
\put(220,-60){\circle{5}}
\put(240,-60){\circle{5}}
\put(260,-60){\circle{5}}
\end{picture}
\end{minipage}
\vskip4cm \caption{Poles of $\Gamma(s)^{2}\Gamma(t)^{2}\Gamma(1-(2s+2t)) x^{-s}y^{-t}$}
\label{fig:MellinPoles}
\end{figure}
\end{center}
\vspace{2cm}

We remark that the affine part  $S$ of the singular loci of the system (~\ref{horn(1,0)(0,1)(2,2)})
 is given  by a parabola and two coordinate axes,
$$S =\{(x,y) \in \C^2 ; xy (16(x-y)^2 -8(x+y) +1)=0 \}.$$

\begin{figure}[h]
\begin{center}
\includegraphics[width=0.4\textwidth]{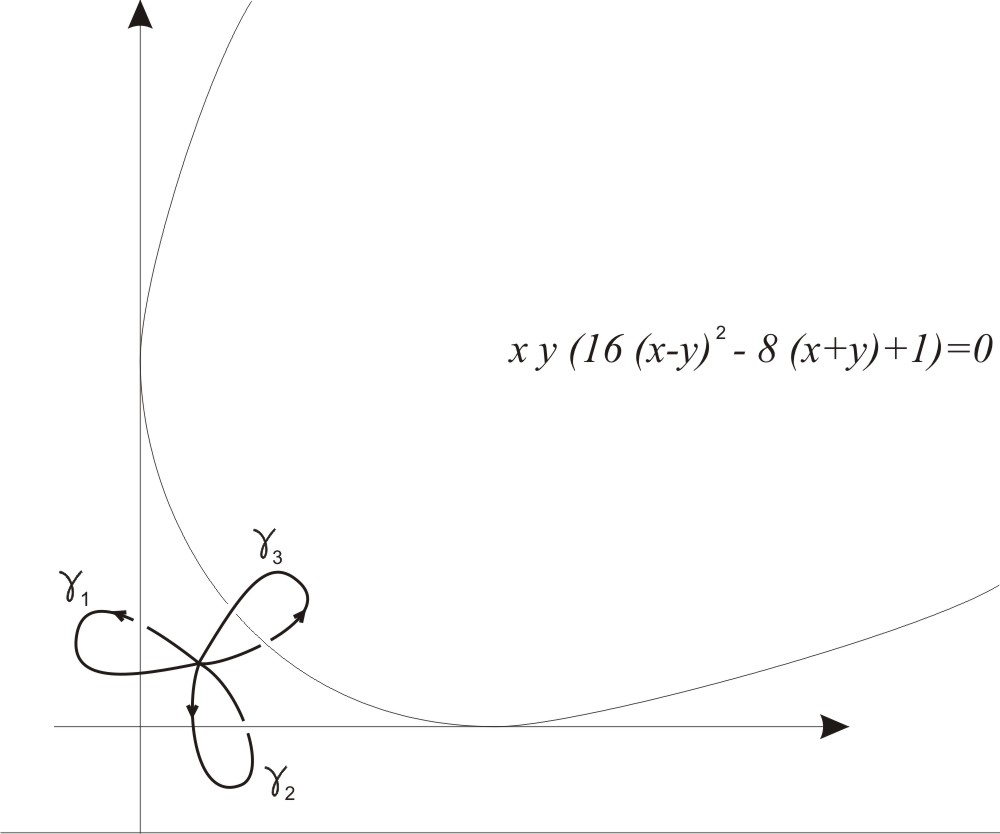}
\caption{generators of 
$\pi_1(\C^2 \setminus S)$ }
\label{fig:fundamentalgroup }
\end{center}
\end{figure}

 For  $S$ the following presentation of  the fundamental group has been established in \cite{Kaneko}, \cite{ATU}:
\begin{equation}
\pi_1(\C^2 \setminus S) =\left< \gamma_1, \gamma_2, \gamma_3 ;  \gamma_2 \gamma_1 = \gamma_1 \gamma_2, 
( \gamma_2 \gamma_3)^2=( \gamma_3 \gamma_2)^2,  (\gamma_3 \gamma_1)^2 =( \gamma_1 \gamma_3)^2
\right> .
\label{fundamentalgroup(1,0)(0,1)(2,2)}
\end{equation}
 Here $\gamma_1$ (resp. $\gamma_2$) denotes the loop around $x=0$ (resp. $y=0$), while $\gamma_3$ denotes
the loop around  the parabola   as drawn in  Figure \ref{fig:fundamentalgroup } (precise parametrisation of loops is available in \cite{Kaneko}).
The loop around the line at infinity $: \P^2 \setminus \C^2$ is represented by $(\gamma_1\gamma_3 \gamma_2\gamma_3 )^{-1}.$

\section{ Monodromy calculus by Mellin-Barnes integrals}

To obtain a monodromy representation of the solution space to  
the system (\ref{horn(1,0)(0,1)(2,2)})
we try to use the following
Mellin-Barnes integrals that span a 4-dimensional  solution space to it,
\begin{equation}
 f^{k}_{i,j}(x,y)= \int_{\Gamma_k }\frac{\Gamma(s)^{2-i}\Gamma(t)^{2-j}\Gamma(1-2(s+t))}{\Gamma(1-s)^i\Gamma(1-t)^j}x^{-s} y^{-t} e^{-(s i +tj) \pi {\sqrt-1}  }ds \wedge dt,
\label{ex(1,0)(0,1)(2,2)}
\end{equation}
where $0 \leq i \leq 1$, $0 \leq j \leq 1$. 

Especially 
we have the following holomorphic solution in the neighbourhood of $(x,y) = (0,0)$ 
\begin{equation}
f^1_{1,1} (x,y)  = \sum_{(i_1, i_2 ) \in \Z^2_{\geq 0}} \frac{(2i_1 + 2i_2)!}{(i_1!)^2 (i_2!)^2} x^{i_1} y^{i_2}.
\label{holsolution}
\end{equation}

Let us denote by $H$ the image of the homomorphism
$$ \rho: \pi_1(\C^2 \setminus  S) \longrightarrow   GL(4, \C)$$
induced by the monodromy action along loops on the base solution vector 
$\vec f = (f_{00}, f_{10}, f_{01}, f_{11})$ defined by (\ref{ex(1,0)(0,1)(2,2)}).

To characterise the domain of convergence of $ f^{k}_{i,j}(x,y)$ we recall the notion of amoeba.
\begin{definition}
\label{def:amoeba}
\rm
The {\it amoeba}~$\mathcal{A}_\phi$ of a polynomial~$\phi(x,y)$
(or of the algebraic hypersurface $\{(x,y) \in  (\C^\ast)^2 ; \phi(x,y)=0 \}$) is defined to be the
image of the hypersurface~$\phi^{-1}(0) $ under the map ${\rm Log } :
(x,y)\mapsto (\log |x|,\log |y|) \in \R^2.$
\end{definition}
Let~$\mathcal{A}(\phi)$ denote the amoeba of the singularity of
the hypergeometric system  (\ref{horn(1,0)(0,1)(2,2)}) with $\phi(x,y) =  (16(x-y)^2 -8(x+y) +1). $
The complement to the amoeba $\mathcal{A}(\phi)$ consists of three connected components $M_1, M_2, M_3$
such that $$ u_k^--C^{\vee}_k  \subset M_k \subset u_k^+- C^{\vee}_k,\;\;\;$$
for some $u_k^- \in M_k$  $u_k^+ \in Log (\mathcal{A}_\phi),  k=1,2,3$ (see Two- sided Abel lemma \cite[Lemma11  ]{PST}). Here $  C^{\vee}_k$ is the dual cone to the cone $ C_k$ 
defined by replacing $\Z$ by $\R$ in the definition (\ref{poles}) of $\Gamma_k,$ $k=1,2,3$.
After \cite[Theorem 5.3]{SadykovTanabe} the convergence domain of  $ f^{k}_{i,j}(x,y)$ contains $Log^{-1}( M_k)$ 
for every fixed $k \in \{1,2,3\}$
and for all $0 \leq i \leq 1$, $0 \leq j \leq 1$.

\begin{proposition}
The analytic continuation of 4 linearly independent solutions (\ref{ex(1,0)(0,1)(2,2)}) to 
the Horn hypergeometric system (\ref{horn(1,0)(0,1)(2,2)}) gives the following monodromy representation $H_0 =<M_{10}, M_{20}, M_{1 \infty}, M_{2\infty}> \leq H$ (a proper subgroup), 
$$
M_{10}=
\left(
\begin{array}{cccc}
 1 & -2\pi i &0 & 0      \\
 0 & 1 & 0      &  0 \\
 0 & 0 & 1      & -2\pi i      \\
 0 & 0 & 0      & 1
\end{array}
\right), \quad
M_{20}=
\left(
\begin{array}{cccc}
 1 & 0 & -2\pi i       & 0      \\
 0 & 1      & 0      &  -2\pi i      \\
 0 & 0      & 1      & 0  \\
 0 & 0      & 0      & 1
\end{array}
\right),
$$

$$
M_{1\infty}^{-1}=
\left(
\begin{array}{cccc}
 1 & 0 &2\pi i  & 0      \\
 0 & -1 & -2      &  0 \\
 0 & 0 & 1      & -2\pi i      \\
 0 & 0 & 0      & -1
\end{array}
\right), \quad
M_{2\infty}^{-1}=
\left(
\begin{array}{cccc}
 1 & 2\pi i &0 & 0      \\
 0 & 1 & 0      &  -2\pi i  \\
 0 & -2 & -1      & 0     \\
 0 & 0 & 0      & -1
\end{array}
\right). \quad
$$

\label{rank4monodromy}
\end{proposition}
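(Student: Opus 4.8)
The plan is to compute, one by one, the four monodromy matrices describing analytic continuation of the vector $\vec f = (f^k_{00}, f^k_{10}, f^k_{01}, f^k_{11})$ around the loops $\gamma_1$ (around $x=0$), $\gamma_2$ (around $y=0$) and the two loops at infinity, working directly from the Mellin--Barnes representation (\ref{ex(1,0)(0,1)(2,2)}) and the pole structure displayed in Figure \ref{fig:MellinPoles}. First I would fix the base point inside $\mathrm{Log}^{-1}(M_1)$, so that the holomorphic solution $f^1_{i,j}$ is realised as the sum of residues over the pole lattice $\Gamma_1$; the other basis vectors $f^2_{i,j}, f^3_{i,j}$ are the residue sums over $\Gamma_2,\Gamma_3$. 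The key analytic input is that continuing $x \mapsto e^{2\pi i}x$ (resp.\ $y \mapsto e^{2\pi i}y$) multiplies the integrand $x^{-s}y^{-t}$ by $e^{-2\pi i s}$ (resp.\ $e^{-2\pi i t}$), which on each pole lattice acts by a shift-and-rescale; since on $\Gamma_1$ one has $s,t \in \Z_{\le 0}$, the factor $e^{-2\pi i s}$ is identically $1$ on $\Gamma_1$ but produces a genuine extra factor on $\Gamma_2,\Gamma_3$ because there $2s+2t-1 \in \Z_{\ge 0}$ forces $s$ (or $t$) to run over $\tfrac12 + \Z$.

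The mechanism for the small loops is then: $\gamma_1$ acts trivially on $f^1_{\cdot,\cdot}$ and mixes $f^2$ with $f^1$ (and $f^3$ with $f^1$, etc.) through the well-known identity expressing $\Gamma(s)^{2-i}/\Gamma(1-s)^i$ near a half-integer versus an integer pole — concretely, the difference between the two determinations is a residue contribution picking up the first-order pole of $\Gamma(1-2(s+t))$, which yields the constant $-2\pi i$ entries in $M_{10}, M_{20}$. Here the factor $e^{-(si+tj)\pi\sqrt{-1}}$ inserted in (\ref{ex(1,0)(0,1)(2,2)}) is exactly what is needed to make these connection contributions come out real multiples of $2\pi i$ without auxiliary connection matrices; I would verify this by a direct residue computation at the relevant boundary pole of each $\Gamma_k$. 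For the loops at infinity one uses the relation $(\gamma_1\gamma_3\gamma_2\gamma_3)^{-1}$ representing the loop around $\P^2 \setminus \C^2$ together with the reflection $s \mapsto \tfrac12 - s - t$, $t \mapsto \tfrac12 - s - t$ coming from the functional equation of $\Gamma(1-2(s+t))$; this reflection swaps the roles of $\Gamma_1$ with $\Gamma_2$ and $\Gamma_3$ and is responsible for the $-1$ diagonal entries and the $-2$ off-diagonal entries in $M_{1\infty}^{-1}, M_{2\infty}^{-1}$. I would present $M_{1\infty}, M_{2\infty}$ as composites of the already-computed data along the paths drawn in Figure \ref{fig:fundamentalgroup }.

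Finally I would check internal consistency: the four matrices must satisfy the braid-type relations of $\pi_1(\C^2\setminus S)$ in (\ref{fundamentalgroup(1,0)(0,1)(2,2)}) restricted to the subgroup they generate, i.e.\ $M_{10}, M_{20}$ commute and the mixed relations hold; this is a finite matrix verification. One also checks $\det M_{10} = \det M_{20} = 1$ and $\det M_{1\infty}^{-1} = \det M_{2\infty}^{-1} = 1$, consistent with the rank-$4$ holonomic system. The statement that $H_0$ is a \emph{proper} subgroup of $H$ follows because the loop $\gamma_3$ around the parabola — which is needed to generate the full image, cf.\ Remark \ref{remark:gamma2} — is not expressible through $M_{10}, M_{20}, M_{1\infty}, M_{2\infty}$ alone; I would note this as a corollary of the block structure of the computed matrices. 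I expect the main obstacle to be the bookkeeping of the half-integer pole lattices $\Gamma_2, \Gamma_3$ and the correct tracking of the branch of $x^{-s}y^{-t}$ together with the phase factor $e^{-(si+tj)\pi\sqrt{-1}}$ across the two distinct determinations, since a sign or factor-of-two error there propagates into every entry; the residue computations giving the $-2\pi i$ and $-2$ entries are the technical heart of the argument and must be done carefully using the two-sided Abel lemma \cite[Lemma 11]{PST} to justify interchanging analytic continuation with the residue summation on each convergence domain $\mathrm{Log}^{-1}(M_k)$.
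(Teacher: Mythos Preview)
Your overall outline---compute each local monodromy directly from the Mellin--Barnes representation, then observe that only a proper subgroup of $\pi_1(\C^2\setminus S)$ is captured---matches the paper. But the mechanism you describe for $M_{10}$ and $M_{20}$ is wrong, and this error would propagate through the whole computation.

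You claim that $\gamma_1$ acts trivially on all of the $f^1_{i,j}$ because on $\Gamma_1$ one has $s\in\Z_{\le 0}$ and hence $e^{-2\pi i s}=1$. This overlooks that for $i=0$ the integrand has a \emph{double} pole at each $s=-n$ coming from $\Gamma(s)^2$, so the residue there involves $\frac{d}{ds}\big(x^{-s}e^{-2\pi i s}\big)\big|_{s=-n}$, which picks up an extra $-2\pi i$ under $x\mapsto e^{2\pi i}x$ even though $e^{-2\pi i s}|_{s=-n}=1$. Concretely, the paper computes $\lambda_{10}(f^1_{0,1})=f^1_{0,1}-2\pi i\,f^1_{1,1}$ and $\lambda_{10}(f^1_{0,0})=f^1_{0,0}-2\pi i\,f^1_{1,0}$, entirely within the $\Gamma_1$ basis; the $-2\pi i$ entries in $M_{10},M_{20}$ come from this logarithmic effect, not from any mixing with $\Gamma_2$ or $\Gamma_3$ or from half-integer poles. (The paper also points out that this step reduces to the $n=2$ case of the univariate Lemma~\ref{lemmaunivariate} on the Pochhammer equation, which makes the double-pole residue calculus explicit.)

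Your proposed route to $M_{1\infty}^{-1}$ via a reflection $s\mapsto\tfrac12-s-t$ and a composite along $(\gamma_1\gamma_3\gamma_2\gamma_3)^{-1}$ is also not what is done and is too vague as stated. The paper instead passes to the $\Gamma_2$ representation (this is the ``Mellin--Barnes contour throw'': $f^1_{i,j}$ and $f^2_{i,j}$ are the same solution, so no connection matrix is needed), writes the series expansion of each $f^2_{i,j}$ near $x=\infty$, and reads off $\lambda_{1\infty}$ by direct comparison of asymptotics; the $-1$ and $-2$ entries arise because in $\Gamma_2$ the relevant poles sit at half-integers in $s$, so $e^{-2\pi i s}=-1$ there. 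The matrix $M_{2\infty}$ then follows from the $x\leftrightarrow y$ symmetry, $M_{2\infty}=E_{2,3}M_{1\infty}E_{2,3}$. The properness of $H_0\le H$ is argued exactly as you suggest: the construction only sees $\langle\gamma_1,\gamma_2,\gamma_1\gamma_3\gamma_2\gamma_3\rangle$, never $\gamma_3$ alone.
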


Here the local monodromy matrices act on the solution space from right. That is to say for ${\vec a} \in \C^4 \cong <{\vec a}, \vec f> $ with $\vec f = (f_{00}, f_{10}, f_{01}, f_{11}),$
the monodromy action around $x=0$ is given by $\vec a \rightarrow {\vec a} M_{10}.$ 
The local monodromy acts on the column vector of solutions   $^t \vec f$ from left.

\begin{proof}
We shall use a method  (named  Mellin-Barnes contour throw \cite[ Proposition 6.6]{SadykovTanabe}) to find analytic continuation of  an integral (\ref{ex(1,0)(0,1)(2,2)}) from one domain of convergence to another. This is a generalisation of  a method to calculate connection matrix 
 for the univariate hypergeometric function by means of Barnes integrals (\cite[2.4.6]{IKSY}, \cite{Smith} ).

Let us denote by $\lambda_{10}(f^k_{i,j})$ the result of the monodromy action on $f^k_{i,j}(x,y)$ around $x=0$
$$ \lambda_{10}(f^k_{i,j}) (x,y) = f^k_{i,j}(e^{2 \pi {\sqrt-1}  }x,y), \;\;\;  0 \leq i \leq 1,   0 \leq j \leq 1.$$
 In an analogous way we denote
$$   \lambda_{20}(f^k_{i,j}) (x,y) = f^k_{i,j}(x, e^{2 \pi {\sqrt-1}  }y), \;\;\;  0 \leq i \leq 1,   0 \leq j \leq 1. $$

For $f^k_{i,j}(x,y)$ convergent in the neighbourhood of $(x,y) = (\infty,0)$  the result of the clockwise monodromy action on it around $x=\infty$ is denoted by
$$   \lambda_{1 \infty}(f^k_{i,j}) (x,y) = f^k_{i,j}( e^{2 \pi {\sqrt-1}  }x, y), \;\;\;  0 \leq i \leq 1,   0 \leq j \leq 1. $$

For $f^\ell_{i,j}(x,y)$ convergent in the neighbourhood of $(x,y) = (0, \infty)$  clockwise turn around $y = \infty$ yields
$$   \lambda_{2 \infty}(f^\ell_{i,j}) (x,y) = f^\ell_{i,j}(x,  e^{2 \pi {\sqrt-1}  } y), \;\;\;  0 \leq i \leq 1,   0 \leq j \leq 1. $$
Further we shall calculate the above monodromy actions on the local solutions.

\begin{itemize}
 \item The local monodromy of  $f^1_{1,1}(x,y)$ around $x=0.$
\end{itemize}

The residue 
$$\sum_{n\geq 0, m\geq 0 } Res_{s=-n} Res_{t=-m}
\frac{\Gamma(s)\Gamma(t)\Gamma(1-2(s+t))}{\Gamma(1-s)\Gamma(1-t)}x^{-s} y^{-t} e^{-(s  +t) \pi {\sqrt-1}  }$$
will give us 
a function (\ref{holsolution}) holomorphic near $(0,0)$ and in $Log^{-1}(M_1)$. Thus   $\lambda_{1 0}(f^1_{1,1}) (x,y) = f^1_{1,1} (x,y), $
$\lambda_{2 0}(f^1_{1,1}) (x,y) = f^1_{1,1} (x,y). $

\begin{itemize}
 \item The local monodromy of  $f^1_{0,1}(x,y)$ around $x=0.$
\end{itemize}

The residue 
$$\sum_{n\geq 0, m\geq 0 } Res_{s=-n} Res_{t=-m}
\frac{\Gamma(s)^2\Gamma(t)\Gamma(1-2(s+t))}{\Gamma(1-t)} ( e^{2 \pi {\sqrt-1}}x)^{-s} y^{-t} e^{-t \pi {\sqrt-1}  }$$
turns out to be
$$\sum_{n\geq 0, m\geq 0 } Res_{s=-n} Res_{t=-m}
\frac{\Gamma(s)^2\Gamma(t)\Gamma(1-2(s+t))}{\Gamma(1-t)} x^{-s} y^{-t} e^{- t \pi {\sqrt-1}  }$$
$$ -2 \pi {\sqrt-1} \sum_{n\geq 0, m\geq 0 } Res_{s=-n} Res_{t=-m}
\frac{\Gamma(s)\Gamma(t)\Gamma(1-2(s+t))}{\Gamma(1-s)\Gamma(1-t)}x^{-s} y^{-t} e^{-(s  +t) \pi {\sqrt-1}  },$$
i.e. $  \lambda_{1 0}(f^1_{0,1}) (x,y) = f^1_{0,1} (x,y) - {2  \pi {\sqrt-1}  }  f^1_{1,1} (x,y).$

\begin{itemize}
 \item The local monodromy of  $f^1_{1,0}(x,y)$ around $x=0.$
\end{itemize} 
The residue 
$$\sum_{n\geq 0, m\geq 0 } Res_{s=-n} Res_{t=-m}
\frac{\Gamma(s)\Gamma(t)^2\Gamma(1-2(s+t))}{\Gamma(1-t)} (e^{2 \pi {\sqrt-1}} x)^{-s} y^{-t} e^{-s \pi {\sqrt-1}  }$$
equals to $f^1_{1,0}(x,y)$ itself i.e. $  \lambda_{1 0}(f^1_{1,0}) (x,y) =  f^1_{1,0}(x,y).$

 \begin{itemize}
 \item The local monodromy of  $f^1_{0,0}(x,y)$ around $x=0.$
\end{itemize} 
The residue 
$$\sum_{n\geq 0, m\geq 0 } Res_{s=-n} Res_{t=-m} \Gamma(s)^2\Gamma(t)^2\Gamma(1-2(s+t)) (e^{2 \pi {\sqrt-1}} x)^{-s} y^{-t}$$
turns out to be
$$\sum_{n\geq 0, m\geq 0 } Res_{s=-n} Res_{t=-m} \Gamma(s)^2\Gamma(t)^2\Gamma(1-2(s+t))  x^{-s} y^{-t}$$
$$  -2 \pi {\sqrt-1}\sum_{n\geq 0, m\geq 0 } Res_{s=-n} Res_{t=-m} \frac{\Gamma(s)\Gamma(t)^2\Gamma(1-2(s+t))}{\Gamma(1-s)} x^{-s} y^{-t} e^{-s \pi {\sqrt-1}  }$$
i.e. 
$  \lambda_{1 0}(f^1_{0,0}) (x,y) = f^1_{0,0} (x,y) - {2  \pi {\sqrt-1}  }  f^1_{1,0} (x,y).$

 \begin{itemize}
 \item As for the local monodromy of  $f^1_{i,j}(x,y)$, $ 0 \leq i \leq 1,   0 \leq j \leq 1$ around $y=0$ the calculation is symmetric with respect to the exchange of variables $x$ and $y.$ 
\end{itemize} 
$$  \lambda_{2 0}(f^1_{0,0}) (x,y) = f^1_{0,0} (x,y) - {2  \pi {\sqrt-1}  }  f^1_{0,1} (x,y), \lambda_{2 0}(f^1_{1,0}) (x,y) = f^1_{1,0} (x,y) - {2  \pi {\sqrt-1}  }  f^1_{1,1} (x,y)$$
 $$  \lambda_{2 0}(f^1_{0,1}) (x,y) =  f^1_{0,1}(x,y),   \lambda_{2 0}(f^1_{1,1}) (x,y) =  f^1_{1,1}(x,y).$$

\begin{itemize}
 \item The local monodromy of  $f^2_{\ast,\ast}(x,y)$ around $y=0$ gives the same result as the local monodromy of  $f^1_{\ast,\ast}(x,y)$   around $y=0$.
\end{itemize} 

\begin{itemize}
 \item The local monodromy of  $f^2_{\ast,\ast}(x,y)$ induced by a clockwise turn around $\frac{1}{x}=0,\frac{1}{x} \mapsto  \frac{e^{-2 \pi {\sqrt-1}}  }{x}.$  
We have the development 
$$ f^2_{1,0} (x,y) = -\frac{1}{2 x} +\frac{\frac{1}{2} i
   (\log (x)+i \pi )-\frac{1}{2} i \log (y)-i \gamma -i \psi
   \left(\frac{1}{2}\right)}{\sqrt{x}}+\frac{-6 y-1}{12 x^2}+  \frac{-30 y^2-20 y-1}{60 x^3}+...$$
($\gamma=$Euler constant, $\psi (z) =\Gamma'(z)/ \Gamma(z)$) that gives us 
$$\lambda_{1 \infty}(f^2_{1,0}) (x,y) +  f^2_{1,0} (x,y)=-\frac{-30 \pi  x^{5/2}+30 x^2+30 x y+5 x+30 y^2+20 y+1}{30 x^3}+ ...$$
We compare it with the development  
$$ f^2_{0,1} (x,y)= \frac{1+5 x+20 y+30 x^2+30 x y+30 y^2-30 \pi  x^{5/2}}{60 x^3}+...$$
and conclude 
$$ \lambda_{1 \infty}(f^2_{1,0}) (x,y) =-  f^2_{1,0} (x,y) - 2  f^2_{0,1} (x,y)$$
Similar residue calculus gives us the following results.
\end{itemize} 
$$  \lambda_{1\infty}(f^2_{0,0}) (x,y) = f^2_{0,0} (x,y) +  {2  \pi {\sqrt-1}  }  f^2_{0,1} (x,y), $$
 $$  \lambda_{1 \infty}(f^2_{0,1}) (x,y) =  f^2_{0,1}(x,y)  - {2  \pi {\sqrt-1}  }  f^2_{1,1} (x,y)  ,  \lambda_{1 \infty}(f^2_{1,1}) (x,y) = - f^2_{1,1}(x,y).$$

\begin{itemize}
 \item The local monodromy of  $f^3_{\ast,\ast}(x,y)$  induced by a clockwise turn around $\frac{1}{y}=0,\frac{1}{y} \mapsto  \frac{e^{-2 \pi {\sqrt-1}}  }{y}$ the result can be obtained from the  the calculation of $\lambda_{1 \infty} (f^2_{\ast,\ast}(x,y))$ due to a symmetry with respect to the exchange of variables $x$ and $y.$    
\end{itemize} 
$$  \lambda_{2 \infty}(f^3_{0,0}) (x,y) = f^3_{0,0} (x,y) +  {2  \pi {\sqrt-1}  }  f^3_{1,0} (x,y), \lambda_{2 \infty}(f^3_{1,0}) (x,y) =  f^3_{1,0} (x,y) -  {2  \pi {\sqrt-1}  }f^3_{1,1} (x,y)$$
 $$    \lambda_{2 \infty}(f^3_{0,1}) (x,y) = -  f^3_{0,1}(x,y)  - 2  f^3_{1,0} (x,y)  ,    \lambda_{2 \infty}(f^3_{1,1}) (x,y) = - f^3_{1,1}(x,y).$$

\begin{itemize}
 \item The local monodromy of  $f^3_{\ast,\ast}(x,y)$ around $x=0$ gives the same result as the local monodromy of  $f^1_{\ast,\ast}(x,y)$   around $x=0$.
\end{itemize}

We shall remark here  that the Mellin-Barnes contour throw sends
$f^k_{i,j}(x,y)$ (residues at poles in $\Gamma_k$, holomorphic in $Log^{-1}(M_k)$) to $f^\ell_{i,j}(x,y)$
 (residues at poles in $\Gamma_\ell$,  holomorphic in $Log^{-1}(M_\ell)$) for every  $0 \leq i \leq 1,   0 \leq j \leq 1. $
Thus we have no need to calculate the connection matrix like in \cite[2.4.6]{IKSY}, \cite{Smith}  if we choose the solution basis 
(\ref{ex(1,0)(0,1)(2,2)}).

In the following figure the analytic continuation between the residues along $\Gamma_1$ and those along $\Gamma_2$
is illustrated. By the same principle we can calculate the analytic continuation between residues  $\Gamma_k$ and  $\Gamma_\ell$
for every $\{k,\ell \} \subset \{1,2,3\}.$
\begin{center}
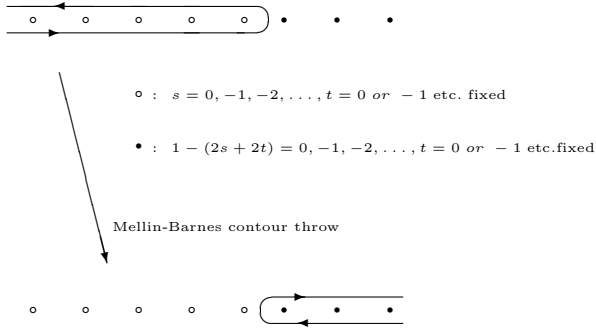
\begin{figure}[ht!]
\vbox{
\begin{minipage}{3cm}
\begin{picture}(150,150)
  \put(0,125){\vector(1,0){20}}
  \put(20,125){\line(1,0){70}}
  \put(67,125){\line(1,0){6}}
  \put(87,125){\line(1,0){8}}
   \put(105,130){\circle*{2}}
   \put(125,130){\circle*{2}}
   \put(145,130){\circle*{2}}
\put(95,130){\oval(8,10)[r]}
   \put(10,130){\circle{2}}
   \put(30,130){\circle{2}}
   \put(50,130){\circle{2}}
   \put(70,130){\circle{2}}
   \put(90,130){\circle{2}}
%
  \put(95,135){\line(-1,0){90}}
  \put(73,135){\line(-1,0){6}}
  \put(53,135){\line(-1,0){6}}
  \put(33,135){\vector(-1,0){15}}
  \put(20,135){\line(-1,0){20}}
%
   \put(105,20){\circle*{2}}
   \put(125,20){\circle*{2}}
   \put(145,20){\circle*{2}}
   \put(10,20){\circle{2}}
   \put(30,20){\circle{2}}
   \put(50,20){\circle{2}}
   \put(70,20){\circle{2}}
   \put(90,20){\circle{2}}
   \put(150,15){\vector(-1,0){40}}
   \put(100,25){\vector(1,0){13}}
   \put(100,15){\line(1,0){13}}
   \put(110,25){\line(1,0){40}}
\put(100,20){\oval(8,10)[l]}
   \put(50,102){\circle{2}}
   \put(55,100){\text{\tiny :\,\, $s = 0,-1,-2,\ldots , t=0 \;or\; -1\; {\rm etc.\; fixed  } $}}
   \put(50,82){\circle*{2}}
   \put(55,80){\text{\tiny :\,\, $1-(2s+2t) = 0,-1,-2,\ldots, t=0 \;or\; -1\; {\rm etc. fixed}$}}
   \put(20,110){\vector(1,-4){18}}
   \put(40,50){\text{\tiny Mellin-Barnes contour throw}}
\end{picture}
\end{minipage}
} 
\caption{Mellin-Barnes contour throw}
\label{fig:Mellin-Barnes
contour throw}
\end{figure}
\end{center}

In conclusion we obtained the matrices
$M_{10}, M_{20}, M_{1\infty}, M_{2\infty}.$ In fact the calculation of  $M_{10}, M_{20}$ can be done with the aid of local monodromy around $x=0$ of solutions to  Pochhammer hypergeometric equation
\begin{equation}
\begin{array}{l}
\left(\theta_{x}^n  - x (n\theta_x +1)\cdots (n\theta_x +n)\right)f(x)=0\\
\end{array}
\label{equnivariate1}
\end{equation}
for $n=2.$
See Appendix, Lemma 
\ref{lemmaunivariate}.
Thus the essential calculus is reduced to that of $M_{1\infty}$ as we see 
$$ M_{2\infty} =E_{2,3} M_{1\infty} E_{2,3}$$
for
$$ E_{2,3}= \left(
\begin{array}{cccc}
 1 & 0 & 0 & 0 \\
 0 & 0 & 1 & 0 \\
 0 & 1 & 0 & 0 \\
 0 & 0 & 0 & 1
\end{array}
\right)$$
that arises because of a symmetry between $x$ and $y$ variables.

According to the presentation (\ref{fundamentalgroup(1,0)(0,1)(2,2)}) this method allows us to calculate at most  the monodromy representation of the group $< \gamma_1, \gamma_2, \gamma_1\gamma_3 \gamma_2\gamma_3 > $ that is a proper subgroup of $  \pi_1 (\C^2 \setminus S ). $
Therefore the group $H_0$ generated by above 4 generators is a proper subgroup of $H.$ 
\end{proof}

This way to consider the analytic continuation by means of Mellin-Barnes contour throw has been used to prove the key statement 
Proposition 6.6 in \cite{SadykovTanabe}.

\begin{remark}
{\rm From this proposition we see easily that this monodromy representation has a  1-dimensional invariant subspace
$<(0,0,0,1)>$ (corresponding to the solution space spanned by $f_{11}$ : a solution holomorphic at $(x,y)=(0,0)$) 
and a 2-dimensional (resp. 3-dimensional)  invariant subspace $<(0,1,1, 0),$ $(0,$ $0,$ $0,$ $1)>$ (resp. $<(0,1,1,$  $ 0), $  $ (0,1,$ $-1, 0),$ $  (0,0,0,1)>.$

This representation has no 2-dimensional subspace with irreducible monodromy action.
Even though the 2-dimensional  solution space spanned by $f_{10}+f_{01}, f_{00}$
corresponds to the space of period integrals of an elliptic curve $\bar {\cal X}$
in $\P^1 \times \P^1$ (whose affine part $\bar {\cal X} \cap (\C^\ast)^2$ is  isomorphic to  ${\cal X}^{(2,2)}_{x,y}$ for $(x,y) \in  (\C^\ast)^2 \setminus S $) its monodromy does not give rise to a group isomorphic to the principal subgroup of level $2$ :$\Gamma(2)$
as expected. More precisely, the base change by 
$$L=\left(
\begin{array}{cccc}
 2 i \pi  & 0 & 0 & 0 \\
 0 & -1+2 i \pi  & 1 & -2 i \pi  \\
 0 & 1 & -1 & 2 i \pi  \\
 0 & -1 & 1 & 0
\end{array}
\right)$$
yields a monodromy representation on a two dimensional solution subspace $V$
such that 
$$ M^L_{10}\mid_V = M^L_{20}\mid_V = \left(
\begin{array}{cc}
 2 & -1 \\
 1 & 0
\end{array}
\right),  M^L_{10} (M^L_{1 \infty})^{-1}\mid_V = M^L_{20} (M^L_{2 \infty})^{-1}\mid_V = \left(
\begin{array}{cc}
 -1& 0\\
 0 & -1
\end{array}
\right) .$$
This monodromy representation is equivalent to
$$ <\left(
\begin{array}{cc}
 1 & 2 \\
 0 & 1
\end{array}
\right), \;\;  \left(
\begin{array}{cc}
 -1& 0\\
 0 & -1
\end{array}
\right)>  $$
i.e. a proper subgroup of $\Gamma(2).$
In other words  the monodromy representation $H_0$ gives only proper subgroup of full monodromy representation $H.$
The reason of this phenomenon lies in the fact that from the monodromy representation of Proposition \ref{rank4monodromy}
it is impossible to recover the monodromy action induced by the loop along $\gamma_3$ of  
(\ref{fundamentalgroup(1,0)(0,1)(2,2)}) i.e.  in this representation one of two Dehn twist actions around  cycles $\alpha, \beta $  (represented in Figure \ref{fig:pathpicture1}) is lacking. 
We may recover at our best the representation of $< \gamma_1, \gamma_2, \gamma_1\gamma_3 \gamma_2\gamma_3 > $ that is a proper subgroup of $  \pi_1 (\C^2 \setminus S ). $
To the moment we did not succeed to interpret Proposition  \ref{rank4monodromy} as a monodromy representation of the fundamental group (\ref{fundamentalgroup(1,0)(0,1)(2,2)}).}

\label{remark:gamma2}
\end{remark}

Here we remark the following facts:
$$rank\; ( M_{10}M_{1\infty} -Id_4) =2$$  not a pseudo-reflection 
 $$rank\;(( M_{10}M_{1\infty})^2- Id_4) =1$$
i.e. $ (M_{10}M_{1\infty})^2$ is a pseudo-reflection.

The following relations also hold,
$$M_{10} M_{20} = M_{20} M_{10}, $$
$$(M_{1\infty}^{-1} M_{10}^{-1} M_{20})^2= (M_{20} M_{1\infty}^{-1} M_{10}^{-1})^2 ,  $$
$$  (M_{1\infty}^{-1})^2= (M_{10} M_{1\infty}^{-1} M_{10}^{-1})^2.$$

We calculate the  Hermitian quadratic invariant $ {\sf H } :$ a $4 \times 4$  matrix
\begin{equation}
 ^t \bar{g} {\sf H } g =  {\sf H } ,
\label{hermitian}
\end{equation}
for every $g \in <M_{10}, M_{20}, M_{1\infty},  M_{2\infty}>:$ the monodromy representation $H_0$  of the system 
(~\ref{horn(1,0)(0,1)(2,2)}) as follows,
$$ {\sf H}= \left(
\begin{array}{cccc}
 0 & 0 & 0 & 0 \\
 0 & 0 &0  & 2 {\sqrt 2}\\
 0 & 0 & 0 & 2 {\sqrt 2} \\
 0 & 2 {\sqrt 2} & 2 {\sqrt 2} & 0
\end{array}
\right).
$$


Let $(\scEtilde_i)_{i=1}^4$ be the full strong exceptional collection on $D^b Coh\; (\P^1 \times \P^1 )$  given by
$$  (\scEtilde_1, \scEtilde_2 , \scEtilde_3, \scEtilde_4) = ({\scO}, {\scO}(1,0), {\scO}(1,1), {\scO}(2,1))$$
and $(\scFtilde_1, \scFtilde_2, \scFtilde_3, \scFtilde_4)$ be
its right dual exceptional collection
characterised by the condition
$$
 Ext^k(\scEtilde_{5-i}, \scFtilde_j) = 
  \begin{cases}
   \C & i=j, \text{ and } k=0, \\
   0 & \text{otherwise}.
  \end{cases}
$$The Euler form on the Grothendieck group $K(\P^1 \times \P^1)$
defined by 
$$\chi ( {\mathcal E}, {\mathcal F})  = \sum_{n\geq 0} (-1)^n dim Ext^n( {\mathcal E}, {\mathcal F}  ).$$
is neither symmetric nor anti-symmetric,
whereas that on $K(\mathcal Y)$ is anti-symmetric.

The bases $\{ [\scEtilde_i] \}_{i=1}^4$
and $\{ [\scFtilde_i] \}_{i=1}^4$ of $K(\P^1 \times \P^1)$
are dual to each other in the sense that
\begin{align} \label{eq:Euler_form}
 \chi(\scEtilde_{5-i}, \scFtilde_j) = \delta_{ij}.
\end{align}
We will write the derived restrictions
of $\scEtilde_i$ and $\scFtilde_i$
to $\cal Y$ as
$\scEbar_i$ and $\scFbar_i$ respectively. After\cite[Lemma 5.4]{Seidel} the split generator on the  curve $\cal Y$ with bidegree $(2, 2)$  can be obtained by restricting the full exceptional collection 
to $\cal Y$.
Unlike $\{ [\scEtilde_i] \}_{i=1}^4$
and $\{ [\scFtilde_i] \}_{i=1}^4$,
$\{ [\scEbar_i] \}_{i=1}^4$ and
$\{ [\scFbar_i] \}_{i=1}^4$ are not bases of $K(\cal Y)$,
and their images in the numerical Grothendieck group
are linearly dependent.

The Gram matrix $\sf G$  with respect to  the Euler form 
      of the split generator  $\{ [\scFbar_i] \}_{i=1}^4$ is calculated as follows. 
\begin{equation}
 {\sf G}= \left( \chi([\scFbar_i], [\scFbar_j])\right)_{i,j =1}^4= \left(
\begin{array}{cccc}
 0 & -2 & 0 & 2 \\
 2 & 0 & -2 & 0 \\
 0 & 2 & 0 & -2 \\
 -2 & 0 & 2 & 0
\end{array}
\right)
\label{gram}
\end{equation}

\begin{proof}
As the Euler form for the restricted sheaves  $\{ \scFbar_i \}_{i=1}^4$ satisfies
\begin{equation} 
\chi ( [\scFbar_{i_1}], [ \scFbar_{i_2} ] ) =    \chi ( \scF_{i_1},  \scF_{i_2}  ) -  \chi ( \scF_{i_2},  \scF_{i_1}  ), 
\label{antisymmetric}
\end{equation} and    $ \chi ( \scF_{i_1},  \scF_{i_2}  )=0$ if $i_1 > i_2$, the Gram matrix must be anti-symmetric. 

From \cite{Beil} it follows  that 
$$  \chi ( {\mathcal O},  {\mathcal O}) =   \chi ( {\mathcal O}(1,0),  {\mathcal O}(1,0))=   \chi ( {\mathcal O}(1,1),  {\mathcal O}(1,1))=  \chi ( {\mathcal O}(2,1),  {\mathcal O}(2,1))=  1, $$
$$  \chi ( {\mathcal O},  {\mathcal O}(1,0)) =   \chi ( {\mathcal O}(1,0),  {\mathcal O}(1,1))=   \chi ( {\mathcal O}(1,1),  {\mathcal O}(2,1))=2, $$
$$   \chi ( {\mathcal O} ,  {\mathcal O}(1,1)) =   \chi ( {\mathcal O}(1,0),  {\mathcal O}(2,1))=4, $$
$$   \chi ( {\mathcal O} ,  {\mathcal O}(2,1)) = 6.$$

These relations  (i.e. $\left( \chi([\scE_i], [\scE_j])\right)_{i,j =1}^4$)  entail
$$  \left( \chi(\scF_i, \scF_j )\right)_{i,j =1}^4 
=   \left(
\begin{array}{cccc}
 1 & -2 & 0 & 2 \\
 0 & 1 & -2 & 0 \\
 0 & 0 & 1 & -2 \\
 0 & 0 & 0 & 1
\end{array}
\right)
= \left(
\begin{array}{cccc}
 1 & 2 & 4 & 6 \\
 0 & 1 & 2 & 4 \\
 0 & 0 & 1 & 2 \\
 0 & 0 & 0 & 1
\end{array}
\right)^{-1}.
$$
This upper triangle matrix
together with (\ref{antisymmetric}) calculates the Gram matrix   ${\sf G}$
(\ref{gram}).
\end{proof}

\begin{proposition}
We can choose an unitary base change matrix $R$
$${\sf  R}=\frac{1}{2}\left(
\begin{array}{cccc}
 \sqrt{2} & 0 & \sqrt{2} & 0 \\
 -i & -1 & i & -1 \\
 -i & 1 & i & 1 \\
 0 & -\sqrt{2} & 0 & \sqrt{2}
\end{array}
\right), \;\;\;  ^t \bar{\sf R} {\sf R} = Id_4  $$
 such that 
$$ {\sf H^ R} =  {\sqrt -1}{\sf G}$$
for the Hermitian quadratic invariant  ${\sf H}$ (\ref{hermitian})  of the monodromy subgroup $H_0.$
\end{proposition}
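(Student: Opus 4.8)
\medskip
\noindent\emph{Plan of the proof.}
The strategy is to reduce the claim to the spectral theorem for Hermitian matrices. First note that both matrices at stake are Hermitian: ${\sf H}$ is even real symmetric, and $\sqrt{-1}\,{\sf G}$ is Hermitian because ${\sf G}$ is real and anti-symmetric, so ${}^{t}\overline{\sqrt{-1}\,{\sf G}}=-\sqrt{-1}\,{}^{t}{\sf G}=\sqrt{-1}\,{\sf G}$. Since we insist that ${\sf R}$ be unitary, ${\sf R}^{-1}={}^{t}\bar{\sf R}$, and therefore ${\sf H}^{\sf R}={\sf R}^{-1}{\sf H}\,{\sf R}={}^{t}\bar{\sf R}\,{\sf H}\,{\sf R}$; thus producing ${\sf R}$ is the same thing as exhibiting a unitary equivalence between the Hermitian matrices ${\sf H}$ and $\sqrt{-1}\,{\sf G}$. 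By the spectral theorem such an equivalence exists if and only if ${\sf H}$ and $\sqrt{-1}\,{\sf G}$ have the same eigenvalues with multiplicities, in which case ${\sf R}$ may be taken to be the change-of-basis matrix $UV^{-1}$ between orthonormal eigenbases $U$ of ${\sf H}$ and $V$ of $\sqrt{-1}\,{\sf G}$ arranged in the same eigenvalue order. (One checks afterwards that this also converts the invariance relation ${}^{t}\bar g\,{\sf H}\,g={\sf H}$ of $H_0$ into ${}^{t}\bar h\,(\sqrt{-1}\,{\sf G})\,h=\sqrt{-1}\,{\sf G}$ for $h$ in the conjugated group, which is the link to Theorem~\ref{th:invariant}.)

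The next step is to compute the two spectra. The vector $e_1$ is in the kernel of ${\sf H}$, and the restriction of ${\sf H}$ to $\langle e_2,e_3,e_4\rangle$ is $\left(\begin{smallmatrix}0&0&c\\ 0&0&c\\ c&c&0\end{smallmatrix}\right)$ with $c=2\sqrt2$, whose characteristic polynomial is $-\lambda(\lambda^{2}-2c^{2})=-\lambda(\lambda^{2}-16)$; hence ${\sf H}$ has eigenvalues $\{4,-4,0,0\}$. For $\sqrt{-1}\,{\sf G}$: the real anti-symmetric matrix ${\sf G}$ annihilates $(1,0,1,0)$ and $(0,1,0,1)$, so $0$ is an eigenvalue of multiplicity at least $2$; its remaining eigenvalues are $\pm i\mu$ with $\mu\ge0$, and from $\mathrm{tr}({\sf G}^{2})=-\sum_{i,j}{\sf G}_{ij}^{2}=-32=-2\mu^{2}$ one gets $\mu=4$. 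So ${\sf G}$ has eigenvalues $0,0,\pm4i$, whence $\sqrt{-1}\,{\sf G}$ has eigenvalues $0,0,\mp4$, i.e. the same multiset $\{4,-4,0,0\}$ as ${\sf H}$ (both thus have signature $(1,1)$ on the $2$-dimensional complement of their kernel). By the first paragraph this already proves the existence of a unitary ${\sf R}$ with ${\sf H}^{\sf R}=\sqrt{-1}\,{\sf G}$.

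To pin down the explicit matrix ${\sf R}$ in the statement, one diagonalises both sides. An orthonormal eigenbasis of ${\sf H}$ is $\tfrac12(0,1,1,\sqrt2)$ and $\tfrac12(0,-1,-1,\sqrt2)$ for the eigenvalues $\pm4$, together with $e_1$ and $\tfrac1{\sqrt2}(0,1,-1,0)$ for the eigenvalue $0$; an orthonormal eigenbasis of $\sqrt{-1}\,{\sf G}$ is $\tfrac12(1,i,-1,-i)$ and $\tfrac12(1,-i,-1,i)$ for $\pm4$, together with $\tfrac1{\sqrt2}(1,0,1,0)$ and $\tfrac1{\sqrt2}(0,1,0,1)$ for $0$; forming $UV^{-1}$ with these choices yields the displayed ${\sf R}$. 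The freedom in selecting an orthonormal basis of the two-dimensional $0$-eigenspace of $\sqrt{-1}\,{\sf G}$ (and the phases of the $\pm4$ eigenvectors) is precisely the ambiguity recorded by the words ``we can choose''. Alternatively, and most cheaply, one simply verifies for the displayed ${\sf R}$ the two matrix identities ${}^{t}\bar{\sf R}\,{\sf R}=Id_{4}$ and ${\sf H}\,{\sf R}={\sf R}\,(\sqrt{-1}\,{\sf G})$ by a column-by-column multiplication.

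I do not foresee any genuine obstacle: each step is a finite computation in $4\times4$ matrices, and the one nontrivial input — the explicit Hermitian invariant ${\sf H}$ of the monodromy subgroup $H_0$ — has already been obtained from the monodromy matrices of Proposition~\ref{rank4monodromy}. The only place demanding a little attention is the double eigenvalue $0$, which makes ${\sf H}$ and $\sqrt{-1}\,{\sf G}$ degenerate: one must diagonalise on the entire nullspace rather than hope for a simple spectrum, and this degeneracy is exactly why ${\sf R}$ is not unique.
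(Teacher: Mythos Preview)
Your proposal is correct. In the paper itself the proposition is not accompanied by a proof at all: the matrix ${\sf R}$ is simply exhibited, and the only justification offered is the sentence following the statement (``In fact by a direct calculation we see that \ldots''), i.e.\ the reader is expected to verify ${}^{t}\bar{\sf R}\,{\sf R}=Id_4$ and ${\sf H}\,{\sf R}={\sf R}\,(\sqrt{-1}\,{\sf G})$ by hand. Your last paragraph does exactly that, so at the level of verification your argument and the paper's coincide.

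What you add beyond the paper is the conceptual explanation via the spectral theorem: you observe that ${\sf H}$ and $\sqrt{-1}\,{\sf G}$ are Hermitian, compute that both have spectrum $\{4,-4,0,0\}$, and conclude that a unitary intertwiner exists and can be written as $UV^{-1}$ for matched orthonormal eigenbases. This is a genuine (and welcome) enrichment over the paper's bare assertion, since it explains \emph{why} such an ${\sf R}$ exists and how one would discover it, rather than merely checking a given candidate. One very minor point: with the particular eigenbases you list, the product $UV^{-1}$ reproduces the displayed ${\sf R}$ only after adjusting the phases of the $\pm 4$ eigenvectors of $\sqrt{-1}\,{\sf G}$ (for instance replacing $\tfrac12(1,i,-1,-i)$ by $\tfrac12(i,-1,-i,1)$); but you already flag this phase/ordering freedom, and your fallback direct verification settles the matter in any case.
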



In fact by a  direct calculation we see that 
${\sqrt -1}{\sf G}$ is an element of a one dimensional real  vector space of Hermitian quadratic invariants of  
$$  <M_{10}^{\sf R},  M_{20}^{\sf R},   M_{1\infty}^{\sf R},    M_{2\infty}^{\sf R}  > \cong   H_0.$$


\section{ Monodromy calculus by generalised Picard-Lefschetz theorem.}

In \cite[Corollary 4.1, Remark 4.4]{GotoMatsumoto} (see also \cite{Kaneko} for generic parameter case) the following monodromy representation of the fundamental group 
(\ref{fundamentalgroup(1,0)(0,1)(2,2)})  with respect to a certain twisted cycle basis has been obtained by means of the generalised Picard-Lefschetz theorem.
A solution holomorphic in the neighbourhood of $(x,y)=(0,0)$ can be written down in the form (\ref{abholsolution}).

\begin{proposition}
The solution to the Horn hypergeometric system (\ref{horn(1,0)(0,1)(2,2ab)}) with rank 4 admits the following monodromy representation
including the cases with  $a,b \in \Z;$ 

$$
\rho_{a,b}(\gamma_1)=\left(
\begin{array}{cccc}
 1 & 0 & 0 & 0 \\
 1 &  e^{2 i a \pi }  & 0 & 0 \\
 0 & 0 & 1 & 0 \\
 0& 0& 1&  e^{2 i a \pi }
\end{array}
\right)
$$

$$
\rho_{a,b}(\gamma_2)=\left(
\begin{array}{cccc}
 1 & 0 & 0 & 0 \\
 0& 1 & 0 & 0 \\
 1 & 0 &  e^{2 i b \pi } & 0 \\
 0& 1& 0&   e^{2 i b \pi }
\end{array}
\right),$$

$$
\rho_{a,b}(\gamma_3)=\left(
\begin{array}{cccc}
1 & -1-e^{2 i a \pi } & -1-e^{2 i b \pi } & 1-e^{2 i (a+b) \pi } \\
 0 & 1 & 0 & 0 \\
 0 & 0 & 1 & 0 \\
 0 & 0 & 0 & 1
\end{array}
\right).
$$
\label{picardlefschetz}
\end{proposition}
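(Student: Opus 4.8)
\emph{Proof strategy.} The plan is to reduce the statement to the twisted Picard--Lefschetz computation of Goto--Matsumoto for non-resonant exponents and then to propagate the formulas to all $(a,b)\in\C^2$, in particular to $(a,b)\in\Z^2$, by analytic continuation in the parameters. First, for $a,b\notin\Z$ I would appeal directly to \cite[Corollary 4.1, Remark 4.4]{GotoMatsumoto} (and to \cite{Kaneko} for generic parameters): the rank-one local system on ${\cal X}^{(2,2)}_{x,y}$ attached to the multivalued form $z^{a}w^{b}\,dz\wedge dw/(zw\,dF_{2,2})$ has first locally finite twisted homology of rank $4$; a basis of it is furnished by the four loaded vanishing cycles of loc.\ cit.; the corresponding regularized period integrals span the solution space of the Horn system (\ref{horn(1,0)(0,1)(2,2ab)}); and the generalized Picard--Lefschetz theorem expresses the monodromy of this basis along the generators $\gamma_1,\gamma_2,\gamma_3$ of $\pi_1(\C^2\setminus S)$ from (\ref{fundamentalgroup(1,0)(0,1)(2,2)}) through the (self-)intersection numbers of the twisted cycles. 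These intersection numbers are polynomials in $e^{2\pi i a}$ and $e^{2\pi i b}$, and the bookkeeping already done in \cite{GotoMatsumoto} produces exactly the matrices $\rho_{a,b}(\gamma_1),\rho_{a,b}(\gamma_2),\rho_{a,b}(\gamma_3)$ displayed above.

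Second, to reach all $(a,b)$ I would argue by rigidity within the family. The entries of $\rho_{a,b}(\gamma_j)$ are polynomials in $e^{2\pi i a},e^{2\pi i b}$, hence entire in $(a,b)$, and a finite matrix check shows that they satisfy the braid-type relations $\gamma_2\gamma_1=\gamma_1\gamma_2$, $(\gamma_2\gamma_3)^2=(\gamma_3\gamma_2)^2$, $(\gamma_3\gamma_1)^2=(\gamma_1\gamma_3)^2$, so $\rho_{a,b}$ is a genuine representation of (\ref{fundamentalgroup(1,0)(0,1)(2,2)}) for every $(a,b)$. On the other hand, by \cite[Corollary 4.3]{DMS} the system (\ref{horn(1,0)(0,1)(2,2ab)}) has holonomic rank $4$ for all $(a,b)$, so its solutions form a rank-$4$ local system over $(\C^2\setminus S)\times\C^2_{a,b}$; trivializing it by the period integrals $I_{a,b}(x,y)$ taken over a basis of the first (twisted) homology of ${\cal X}^{(2,2)}_{x,y}$ — a frame whose dependence on $(a,b)$ is holomorphic over the non-resonant locus and, by constancy of the holonomic rank, extends across all of $\C^2_{a,b}$ — the monodromy matrices along $\gamma_1,\gamma_2,\gamma_3$ become holomorphic functions of $(a,b)$. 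After the fixed linear change of frame identifying this trivialization with the twisted-cycle one, these holomorphic matrices agree with $\rho_{a,b}(\gamma_j)$ on the dense open set $\{a,b\notin\Z\}$ by the first step, hence everywhere; this yields the claim at integer exponents.

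As a consistency check one may verify that $\rho_{a,b}(\gamma_1)$ (resp.\ $\rho_{a,b}(\gamma_2)$) has eigenvalues $1,1,e^{2\pi i a},e^{2\pi i a}$ (resp.\ $1,1,e^{2\pi i b},e^{2\pi i b}$), matching up to orientation the characteristic exponents $0,-a$ of (\ref{horn(1,0)(0,1)(2,2ab)}) along $x=0$ (resp.\ $0,-b$ along $y=0$); that the loop around the line at infinity, represented by $(\gamma_1\gamma_3\gamma_2\gamma_3)^{-1}$, has the spectrum dictated by the factor $\Gamma(1-a-b-2(\theta_x+\theta_y))$; and that the solution (\ref{abholsolution}) holomorphic at $(x,y)=(0,0)$ is the eigenvector with eigenvalue $1$ fixed by both $\rho_{a,b}(\gamma_1)$ and $\rho_{a,b}(\gamma_2)$, which pins down the identification of the cycle basis with the solution basis.

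I expect the genuinely delicate point to be the specialization to resonant parameters $(a,b)\in\Z^2$: there the twisted local system becomes trivial along the relevant loops, the twisted homology basis and its intersection pairing underlying \cite{GotoMatsumoto} degenerate, and the cited theorem cannot be applied verbatim. The argument must therefore run through the family/rigidity statement of the second paragraph — constancy of the rank-$4$ solution local system over the parameter space, holomorphy of the period frame in $(a,b)$, and density of the non-resonant locus — rather than through a naive limit of twisted cycles; making those three ingredients precise is the real content of the proof beyond the citation.
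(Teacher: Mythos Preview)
Your proposal aligns with the paper's treatment: the paper does not supply an independent proof but simply attributes the proposition to \cite[Corollary 4.1, Remark 4.4]{GotoMatsumoto} (with \cite{Kaneko} for generic parameters), obtained via the generalised Picard--Lefschetz theorem for a twisted-cycle basis. Your first step is exactly this citation, and your second step --- propagating the formulas to $a,b\in\Z$ by holomorphy in the parameters --- is a reasonable elaboration of what the paper delegates to Remark~4.4 of \cite{GotoMatsumoto}; the delicate point you flag (degeneration of the twisted basis at resonance) is real, but the paper does not address it beyond that citation either.
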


The  Hermitian quadratic invariant (unique up to a real constant multiplication)  associated to the Appell's system $F_4$
(\ref{horn(1,0)(0,1)(2,2ab)}) can be calculated as
\begin{equation}
\tilde {\sf H} = 
\left(
\begin{array}{cccc}
{\sf V_1}\\
{\sf V_2} \\
{\sf V_3} \\
{\sf V_4} 
\end{array}
\right)
\label{Kanekohermitian}
\end{equation}

$$ {\sf V_1}= (0 , i \left(1+e^{2 i a \pi }\right)  , i \left(1+e^{2 i b \pi }\right) , i \left(-1+e^{2 i (a+b) \pi }\right) )$$
$$ {\sf V_2}= (-i \left(1+e^{-2 i a \pi }\right) , -i e^{-2 i a \pi } \left(-1+e^{4 i a \pi }\right) , i \left(e^{-2 i a \pi
   }-e^{2 i b \pi }\right) , -i e^{2 i b \pi } \left(-1+e^{2 i a \pi }\right) \left(1-e^{-2 i (a+b) \pi }\right) ) $$
$$ {\sf V_3}=-i  ( \left(1+e^{-2 i b \pi }\right) ,  \left(e^{2 i a \pi }-e^{-2 i b \pi }\right) ,  e^{-2 i b \pi }
   \left(-1+e^{4 i b \pi }\right) ,  e^{2 i a \pi } \left(-1+e^{2 i b \pi }\right) \left(1-e^{-2 i (a+b) \pi }\right))$$

$$ {\sf V_4}=\left(1-e^{-2 i (a+b) \pi }\right) (  i  , i \left(-1+e^{2 i a \pi }\right)  , i
   \left(-1+e^{2 i b \pi }\right),  i \left(-1+e^{2 i a \pi }\right)   \left(-1+e^{2 i b \pi }\right) ).$$

The analytic variety in the space of $4 \times 4$ Hermitian matrices represented by (\ref{Kanekohermitian}) depending
on parameters $a,b$
form a closed set.
Thus we can consider the limit case $a, b\rightarrow 0$ and obtain  (after multiplication by $ \sqrt 2$)
\begin{equation}
\tilde {\sf H}_0=\left(
\begin{array}{cccc}
 0 & 2 i \sqrt{2} & 2 i \sqrt{2} & 0 \\
 -2 i \sqrt{2} & 0 & 0 & 0 \\
 -2 i \sqrt{2} & 0 & 0 & 0 \\
 0 & 0 & 0 & 0
\end{array}
\right).
\label{kanekohermitian1}
\end{equation}

From the monodromy representation of
Proposition \ref{picardlefschetz} for the limit case $a, b\rightarrow 0$ we obtain

\begin{equation}
\rho_{(0,0)}(\gamma_1)= \left(
\begin{array}{cccc}
 1 & 0 & 0 & 0 \\
 1 &  1  & 0 & 0 \\
 0 & 0 & 1 & 0 \\
 0& 0& 1&  1
\end{array}
\right),
\; \rho_{(0,0)}(\gamma_2) = 
\left(
\begin{array}{cccc}
 1 & 0 & 0 & 0 \\
 0& 1 & 0 & 0 \\
 1 & 0 &  1 & 0 \\
 0& 1& 0& 1
\end{array}
\right),\;\;
\rho_{(0,0)}(\gamma_3)=\left(
\begin{array}{cccc}
 1 & -2 & -2 & 0 \\
 0 & 1 & 0 & 0 \\
 0 & 0 & 1 & 0 \\
 0 & 0 & 0 & 1
\end{array}
\right).
\label{kanekomonodromy0}
\end{equation}

\begin{proposition}
We can choose an unitary base change matrix $\sf R_0$ 
$$
{\sf R}_0=\frac{1}{\sqrt 2} \left(
\begin{array}{cccc}
 i & 0 & -i & 0 \\
 0 & -\frac{1+i}{\sqrt{2}} & 0 & -\frac{1-i}{\sqrt{2}} \\
 0 & \frac{1-i}{\sqrt{2}} & 0 & \frac{1+i}{\sqrt{2}} \\
 1 & 0 & 1 & 0
\end{array}
\right), \;\; ^t{ \sf \bar R}_0. {\sf R}_0 = Id_4
$$
such that 
$$ \tilde{\sf H}_0^{\sf R_0} = {\sqrt -1} {\sf G}.$$
That is to say a pure imaginary multiple of the Gram matrix $\sf G$ (\ref{gram} ) spans a 1-dimensional real space of  Hermitian quadratic invariants of the monodromy representation  
of (\ref{horn(1,0)(0,1)(2,2)}) 
$$ <\rho_{(0,0)}(\gamma_1)^{\sf R_0}, \rho_{(0,0)}(\gamma_2)^{\sf R_0},  \rho_{(0,0)}(\gamma_3)^{\sf R_0} >$$
given by  (\ref{kanekomonodromy0}). 
\label{gotomatsumoto}
\end{proposition}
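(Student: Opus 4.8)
The plan is to reduce the statement to a finite amount of real linear algebra, following exactly the pattern used for the subgroup $H_0$ in the preceding proposition. First I would check the two explicit identities being asserted: that $\sf R_0$ is unitary, i.e. $^t\bar{\sf R}_0\,{\sf R}_0=Id_4$, and that conjugation by it carries $\tilde{\sf H}_0$ to $\sqrt{-1}\,{\sf G}$, i.e. $^t\bar{\sf R}_0\,\tilde{\sf H}_0\,{\sf R}_0=\sqrt{-1}\,{\sf G}$. With $\tilde{\sf H}_0$ from (\ref{kanekohermitian1}), ${\sf G}$ from (\ref{gram}) and $\sf R_0$ as in the statement all written out, these are mechanical $4\times4$ computations; I would note in passing that $\sqrt{-1}\,{\sf G}$ is Hermitian precisely because $\sf G$ is real and anti-symmetric, and that $\tilde{\sf H}_0$ is Hermitian of rank $2$.

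Next I would establish that $\tilde{\sf H}_0$ --- hence, via the unitary conjugation just verified, $\sqrt{-1}\,{\sf G}$ --- is indeed a Hermitian quadratic invariant of the representation (\ref{kanekomonodromy0}), that is $^t\overline{\rho_{(0,0)}(\gamma_k)}\,\tilde{\sf H}_0\,\rho_{(0,0)}(\gamma_k)=\tilde{\sf H}_0$ for $k=1,2,3$. There are two routes and I would record both. The direct one substitutes the three explicit $4\times4$ matrices of (\ref{kanekomonodromy0}) into the invariance relation. The structural one, which I prefer, uses the fact recalled just before the proposition that $\tilde{\sf H}$ of (\ref{Kanekohermitian}) is a Hermitian invariant of the Appell $F_4$ monodromy $\rho_{a,b}$: since $\rho_{a,b}(\gamma_k)$ and $\tilde{\sf H}$ depend continuously (indeed real-analytically) on the real parameters $a,b$, the relation $^t\overline{\rho_{a,b}(\gamma_k)}\,\tilde{\sf H}\,\rho_{a,b}(\gamma_k)=\tilde{\sf H}$ survives the limit $a,b\to0$, and $\tilde{\sf H}_0$ is by construction $\sqrt{2}\,\lim_{a,b\to0}\tilde{\sf H}$.

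The last point, and the one I expect to be the real obstacle, is the one-dimensionality: that the real vector space of Hermitian matrices $\mathsf{H}'$ with $^t\overline{\rho_{(0,0)}(\gamma_k)}\,\mathsf{H}'\,\rho_{(0,0)}(\gamma_k)=\mathsf{H}'$ for $k=1,2,3$ is spanned by $\tilde{\sf H}_0$. This is not automatic: the representation (\ref{kanekomonodromy0}) is strongly reducible --- $\rho_{(0,0)}(\gamma_3)$ fixes a $3$-dimensional subspace pointwise and $\rho_{(0,0)}(\gamma_1),\rho_{(0,0)}(\gamma_2)$ are unipotent --- so no Schur-type argument applies and the dimension has to be computed. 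The plan is to treat the invariance conditions as a homogeneous $\R$-linear system on the $16$ real coordinates of a $4\times4$ Hermitian matrix $\mathsf{H}'$: the unipotent generators $\gamma_1$ and $\gamma_2$ (conjugate under the coordinate transposition $E_{2,3}$, so only one of them needs to be treated from scratch) pin down most of the entries, and imposing the remaining relation from $\gamma_3$ collapses the solution set to the line $\R\cdot\tilde{\sf H}_0$. Resolving this linear system --- the main computational step --- finishes the proof; together with the preceding proposition it shows that the Mellin--Barnes subgroup $H_0$ and the Picard--Lefschetz representation (\ref{kanekomonodromy0}) carry, up to a real scalar and unitary conjugation, the single Hermitian invariant $\sqrt{-1}\,{\sf G}$, which is Theorem \ref{th:invariant}.
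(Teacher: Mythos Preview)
Your plan is correct and matches the paper's approach: the paper gives no proof for this proposition, treating it as a direct computation in the same spirit as the sentence following the preceding proposition (``In fact by a direct calculation we see that \ldots''). The checks you outline --- unitarity of ${\sf R}_0$, the conjugation identity $^t\bar{\sf R}_0\,\tilde{\sf H}_0\,{\sf R}_0=\sqrt{-1}\,{\sf G}$, invariance of $\tilde{\sf H}_0$ under (\ref{kanekomonodromy0}), and the one-dimensionality via the real linear system on Hermitian matrices --- are exactly what is needed, and your limit argument for invariance is a clean way to inherit it from the generic-parameter case (\ref{Kanekohermitian}).
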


\begin{remark}

{\rm 1. There is no conjugation matrix that would send $M_{j,0}$ to $\rho_{(0,0)}(\gamma_j)$ for both $j=1, 2.$

2. The question about the faithfulness of the monodromy representation  (\ref{kanekomonodromy0}) deserves a special attention.
In other words, we ask whether the monodromy group given in  Proposition \ref{gotomatsumoto} is isomorphic to 
the fundamental group  $\pi_1(\C^2 \setminus S) $ given by
(\ref{fundamentalgroup(1,0)(0,1)(2,2)} ) . If the answer is negative  e.g.   (\ref{kanekomonodromy0})  gives rise to a subgroup 
strictly smaller than $\pi_1(\C^2 \setminus S) $, we may ask the same question about the monodromy representation  given in Proposition \ref{picardlefschetz} for generic values of $a,b$.}
 \end{remark}

\section{Appendix: Maximally unipotent local monodromy of the Poch-hammer hypergeometric equation. }

We prepare a lemma on the local monodromy around $x=0$
of the Pochhammer hypergeometric equation,
\begin{equation}
\begin{array}{l}
\left(\theta_{x}^n  - x (n\theta_x +1)\cdots (n\theta_x +n)\right)f(x)=0.\\
\end{array}
\label{equnivariate}
\end{equation}
with reducible monodromy  to which the Levelt type theorem \cite[Theorem 3.5]{BeukersHeckman}
cannot be directly applied. Despite the reducibility, in fact the Levelt type theorem holds \cite{Tan04},
 \cite[Theorem 1.1]{TaU13} in this case also.

\begin {lemma}
Let us consider a basis of the solution space to (~\ref{equnivariate})  as follows.
\begin{equation}
 f_{j}(x)= \sum_{k \in \Z_{\geq 0}}Res_{s=-k}\frac{\Gamma(s)^{n-j}\Gamma(1-ns)}{\Gamma(1-s)^j}x^{-s} e^{-\pi {\sqrt -1}js}ds,\; j=0, \cdots n-1.
\label{solunivariate}
\end{equation}
The monodromy around $x=0$ 
with respect to a basis (~\ref{solunivariate}) of solutions to (\ref{equnivariate})  is given as follows
$$ \left(
\begin{array}{cccccc}
 1 & -2\pi i  & 0& \cdots &0& 0     \\
 0 & 1  &-2\pi i &\cdots  &0& 0 \\
0 & 0& 1  & \cdots &0 & 0 \\
\vdots & \vdots  &\vdots &\ddots   &\vdots& \vdots \\
0& 0 & 0&\cdots & 1  &-2\pi i   \\
0& 0 & 0&  \cdots  & 0 & 1   
\end{array}
\right).$$
\label{lemmaunivariate}
 \end {lemma}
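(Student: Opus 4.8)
The plan is to compute the monodromy around $x=0$ directly from the Mellin--Barnes representation (\ref{solunivariate}), exploiting the fact that turning $x \mapsto e^{2\pi\sqrt{-1}}x$ only affects the factor $x^{-s}$ through the cheap multiplier $e^{-2\pi\sqrt{-1}s}$. First I would fix $j$ and write $f_j(x) = \sum_{k\geq 0} \mathrm{Res}_{s=-k}\, g_j(s) x^{-s}$ with $g_j(s) = \Gamma(s)^{n-j}\Gamma(1-ns)\Gamma(1-s)^{-j} e^{-\pi\sqrt{-1}js}$, and observe that the effect of $\lambda_{10}$ is to replace $g_j(s)$ by $g_j(s)e^{-2\pi\sqrt{-1}s}$ in each residue. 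Since at a pole $s=-k$ of order $>1$ the residue picks up derivatives, the new exponential contributes a Taylor expansion $e^{-2\pi\sqrt{-1}s} = e^{2\pi\sqrt{-1}k}\bigl(1 - 2\pi\sqrt{-1}(s+k) + \cdots\bigr) = 1 - 2\pi\sqrt{-1}(s+k) + \cdots$; only the first two terms survive because the poles of $g_j$ have order at most $2$ (for $1\leq j\leq n-1$ the factor $\Gamma(s)^{n-j}$ meets $\Gamma(1-ns)$, and one checks the pole order is exactly $2$ at $s=-k$, $k\geq 1$, while $j=0$ gives a pole of order $n$ but there the relevant combinatorics still truncate after the linear term against $\Gamma(1-s)^{-j}=1$).

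The key step is then the identity
$$
\mathrm{Res}_{s=-k}\Bigl((s+k)\, g_j(s)\, x^{-s}\Bigr) = \mathrm{Res}_{s=-k}\Bigl(g_{j+1}(s)\, x^{-s}\Bigr)\cdot c,
$$
i.e. that multiplying the integrand of $f_j$ by the vanishing factor $(s+k)$ and taking residues reproduces, term by term in $k$, the integrand of $f_{j+1}$ up to the universal constant coming from $\Gamma(s) = \frac{(-1)^k}{k!(s+k)} + O(1)$ near $s=-k$ and the bookkeeping of the $e^{-\pi\sqrt{-1}js}$ phases. Concretely, $\Gamma(s)^{n-j}\cdot(s+k) \sim \frac{(-1)^k}{k!}\Gamma(s)^{n-j-1}$ near $s=-k$, and the phase $e^{-\pi\sqrt{-1}js}$ must be compared with $e^{-\pi\sqrt{-1}(j+1)s}$; a short check of these normalisations shows the constant is exactly $-1$ absorbed into the $-2\pi\sqrt{-1}$, giving $\lambda_{10}(f_j) = f_j - 2\pi\sqrt{-1}\, f_{j+1}$ for $0\leq j\leq n-2$, and $\lambda_{10}(f_{n-1}) = f_{n-1}$ since there the integrand $\Gamma(1-ns)\Gamma(1-s)^{-(n-1)}$ has only simple poles and the linear term in the Taylor expansion of the exponential contributes a residue of a holomorphic function, hence zero. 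Assembling these relations into a matrix acting on $(f_0,\dots,f_{n-1})$ from the right yields precisely the stated bidiagonal Jordan-type matrix.

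The main obstacle I anticipate is the careful verification that, for the maximally degenerate factor $\Gamma(s)^{n-j}$ with $j$ small (especially $j=0$, pole order $n$), no higher-order terms of $e^{-2\pi\sqrt{-1}s}$ survive: one must show that $\mathrm{Res}_{s=-k}\bigl((s+k)^2 g_0(s)x^{-s}\bigr)$ and higher already produce only solutions that are \emph{holomorphic} at $x=0$, hence lie in the span of $f_{n-1}$ and contribute nothing new beyond the next basis vector. This is where one uses that the residue of $(s+k)^{m}\Gamma(s)^{n}\Gamma(1-ns)x^{-s}$ for $m\geq 2$ reduces, by the same $\Gamma(s)\sim\frac{(-1)^k}{k!(s+k)}$ collapse, to a residue whose integrand has a pole of order $\leq n-m$, and for $m$ large enough this is a convergent power series in $x$ with no logarithms — so it is a multiple of $f_{n-1}$, and a degree count in the $x$-expansion forces that multiple to be zero. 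Once this truncation is in hand, the rest is the bookkeeping of $\Gamma$-function and exponential normalisations described above, which is routine; the case $n=2$ needed in Proposition \ref{rank4monodromy} is the simplest instance and can be checked by hand against the formulas for $\lambda_{10}(f^1_{0,0})$ and $\lambda_{10}(f^1_{1,0})$ already computed there.
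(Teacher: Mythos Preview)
Your overall strategy---expand the monodromy multiplier $e^{-2\pi\sqrt{-1}s}$ in a Taylor series about each pole $s=-k$ and compare residues---is exactly the one used in the paper. But your execution has a genuine gap in the pole-order bookkeeping, and the proposed fix in your ``main obstacle'' paragraph does not work.

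First, the integrand $g_j(s)=\Gamma(s)^{n-j}\Gamma(1-ns)\Gamma(1-s)^{-j}e^{-\pi\sqrt{-1}js}$ has a pole of order exactly $n-j$ at $s=-k$: the factor $\Gamma(1-ns)$ is holomorphic and nonzero there (its poles sit at $s=1/n,2/n,\dots$), so nothing ``meets'' $\Gamma(s)^{n-j}$ to cut the order down to $2$. Consequently, for $j\le n-3$ the Taylor expansion of $e^{-2\pi\sqrt{-1}s}$ contributes not just the constant and linear terms but all terms up to order $n-j-1$.

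Second, your key identity $\mathrm{Res}_{s=-k}\bigl((s+k)g_j(s)x^{-s}\bigr)=c\cdot\mathrm{Res}_{s=-k}\bigl(g_{j+1}(s)x^{-s}\bigr)$ fails for the same reason. One has $(s+k)g_j(s)=B(s+k)\,g_{j+1}(s)$ where $B(u)=u\,\Gamma(u)\Gamma(1-u)e^{\pi\sqrt{-1}u}=1+\pi\sqrt{-1}\,u+\cdots$; since $g_{j+1}$ still has a pole of order $n-j-1\ge 2$ when $j\le n-3$, the higher Taylor coefficients of $B$ contaminate the residue, and there is no single constant $c$.

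Third, the claim that $\mathrm{Res}_{s=-k}\bigl((s+k)^m g_0(s)x^{-s}\bigr)$ for $m\ge 2$ produces a function holomorphic at $x=0$ is false: this residue still involves $(\log x)^{n-m-1}$, so it is a nontrivial linear combination of $f_m,\dots,f_{n-1}$, not a multiple of $f_{n-1}$ alone. No degree count can force these contributions to vanish individually.

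What actually happens is a cancellation among \emph{all} the higher-order terms, and this is where the paper's argument differs from yours. Writing $P(s)=\Gamma(s)\Gamma(1-s)e^{\pi\sqrt{-1}s}$ (which is periodic of period $1$) and $H(s)=\Gamma(1-ns)\Gamma(1-s)^{-n}x^{-s}e^{-n\pi\sqrt{-1}s}$ (holomorphic at all $s=-k$), one has $f_{n-k}=\int P(s)^k H(s)\,ds$ exactly, so the claim $\lambda_{10}(f_{n-k-1})=f_{n-k-1}-2\pi\sqrt{-1}\,f_{n-k}$ becomes the residue identity
\[
\mathrm{Res}_{s=0}\bigl(P(s)^{k+1}H(s)e^{-2\pi\sqrt{-1}s}\bigr)
=\mathrm{Res}_{s=0}\bigl(P(s)^{k+1}H(s)-2\pi\sqrt{-1}\,P(s)^{k}H(s)\bigr).
\]
Expanding $B(s)=sP(s)=\sum_{m\ge 0}B_m(2\pi\sqrt{-1}s)^m/m!$ via Bernoulli numbers and comparing coefficients, the identity reduces precisely to the Bernoulli recurrence $\sum_{r=0}^{\ell}\binom{\ell+1}{r}B_r=\ell+1$ for each $\ell\ge 1$. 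This is the missing ingredient: the higher-order terms do not vanish separately, they cancel by this classical identity.
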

\begin{proof}
First of all we introduce a periodic meromorphic function 
$$ P(s) = \Gamma(s)\Gamma(1-s) e^{\pi {\sqrt -1}s} = \frac{\pi e^{\pi {\sqrt -1}s}}{ sin\; \pi s },$$
with period $1$ i.e. $P(s+1) =P(s)$
and a meromorphic function
$$H(s) = \frac{\Gamma(1-ns)}{\Gamma(1-s)^{n}}x^{-s} e^{-n\pi {\sqrt -1} s}.$$ 
This means that
$$ f_{n-k}(x) = \int_{\Gamma_0}P(s)^k H(s)ds$$ after the notation (~\ref{solunivariate}).
We shall denote by $\Gamma_0$ the integration contour turning counter clockwise around the negative integers points so that  the integration along it give the summation of residues   (~\ref{solunivariate}). 
Here we recall the partial fraction expansion of the cosecant function,
$$\frac{\pi}{ sin\; \pi s } = \frac{1}{s}+ \sum_{m \in \N} (-1)^m (\frac{1}{s+m} +\frac{1}{s-m}  )$$
It is clear that $P(s)$ has residue $(-1)^m$ at $s=-m \in \Z_{\leq 0}$ and there $P(s)^kH(s)$ has the only possible $k-$th order poles  on the negative real axis.  In summay the following relation would entail the desired result,
$$f_{n-k-1}(e^{2\pi {\sqrt -1}}x)= \int_{\Gamma_0}P(s)(P(s)^k H(s)e^{-2\pi {\sqrt -1}s})ds$$
 $$= \int_{\Gamma_0}P(s)(P(s)^k H(s))ds -2\pi {\sqrt -1}\int_{\Gamma_0}P(s)^k H(s)ds,$$
\begin{equation}
=f_{n-k-1}(x) -2\pi {\sqrt -1} f_{n-k}(x).
\label{logrecurrence}
\end{equation} 
for $k=1, \cdots, n-1$.

We show ( ~\ref{logrecurrence} )  by the following argument.
Let us introduce a function $$ B(s)=sP(s) = \sum_{m \geq 0} \frac{B_m (2 \pi {\sqrt -1} s)^m}{m!},$$ with $B_m:$Bernoulli number such that $B_0=1, B_1 = 1/2, B_{2m+1}=0.$
 The leading term of the asymptotic expansion at $x=0$ of a solution to (~\ref{equnivariate}) in the form of a linear combination of $(ln x)^{j},$ $j=0, \cdots, n-1$ completely determines how this solution is represented as a linear combination of solutions $f_{n-k-1}(x),$ $k=0, \cdots, n-1.$ This situation allows us to reduce the proof of 
(~\ref{logrecurrence}) to the following equality between residues.
$$Res_{s=0} ((\frac{B(s)}{s})^{k+1} H(s)e^{-2\pi {\sqrt -1}s})= 
Res_{s=0} ((\frac{B(s)}{s})^{k+1} H(s) - 2\pi {\sqrt -1} (\frac{B(s)}{s})^{k} H(s)).$$
The LHS of the above equality equals to
$$ \frac{1}{k!} (B(s)^{k+1}  H(s)e^{-2\pi {\sqrt -1}s})^{(k)}|_{s=0}$$
$$= \frac{1}{k!} \left((B(s)^{k+1}  H(s))^{(k)}|_{s=0}+ \sum_{\ell=0}^{k-1} \;_kC_\ell (B(s)^{k+1}  H(s))^{(\ell)}|_{s=0}(-2\pi {\sqrt -1} )^{k-\ell}  \right).$$Hence
$$ \frac{1}{k!} \left( (B(s)^{k+1}  H(s)e^{-2\pi {\sqrt -1}s})^{(k)}-  (B(s)^{k+1}  H(s))^{(k)}\right)|_{s=0} $$ 
$$= -2\pi {\sqrt -1} \left(    \sum_{\ell=1}^k    \frac{1}{(k-\ell)! \ell ! } (B(s)^{k+1}  H(s))^{(k-\ell)}|_{s=0} (-2\pi {\sqrt -1})^{\ell-1} \right),$$ as $B(0)=1.$
The required equality will be proven if 
$$ \sum_{\ell=1}^k    \frac{1}{(k-\ell)! \ell ! } (B(s)^{k+1}  H(s))^{(k-\ell)}|_{s=0} (-2\pi {\sqrt -1})^{\ell-1}  -  \frac{1}{(k-1)!} (B(s)^{k}  H(s))^{(k-1)}|_{s=0}$$
turns out to be zero. This difference is calculated 
as 
$$ \sum_{\ell=1}^k    \frac{1}{(k-\ell-1)!} ( \sum_{r=1}^{\ell +1} \frac{  (-2\pi {\sqrt -1})^{r-1} B^{(\ell -r+1)}(0) } {  (\ell-r+1)! r !  } ) (B(s)^{k}  H(s))^{(k-\ell-1)}|_{s=0} $$
$$ =\sum_{\ell=1}^k    \frac{ (-2\pi {\sqrt -1})^{\ell} }{(k-\ell-1)! \ell!} (B_\ell+ \sum_{r=0}^{\ell -1} \frac{  \ell ! B_r} {  (\ell-r+1)! r !  } -1 ) (B(s)^{k}  H(s))^{(k-\ell-1)}|_{s=0}. $$
The coefficient of the factor $(B(s)^{k}  H(s))^{(k-\ell-1)}|_{s=0}$ vanishes by virtue of the recurrent relation for Bernoulli numbers.

It is worthy noticing that the  equality
$$\int_{\Gamma_0}P(s)(P(s)^k H(s)e^{-2\pi {\sqrt -1}s})ds
 = \int_{\Gamma_0}P(s)(P(s)^k H(s))ds -2\pi {\sqrt -1}\int_{\Gamma_0}P(s)^k H(s)ds,$$
holds for any function $H(s)$ holomorphic in the neighbourhood of the negative real axis.
The last equality yields the desired result.
\end{proof}

\end{document}